\definecolor{lightgrey}{rgb}{0.85,0.875,0.85}   
\newcommand{\clg}{\cellcolor{lightgrey}}
\theoremstyle{definition}
\newtheorem{defi}{Definition}[section]
\newtheorem*{rema}{Remark}	
\newtheorem{conj}[defi]{Conjecture}
\theoremstyle{plain}
\newtheorem{prop}[defi]{Proposition}
\newtheorem{coro}[defi]{Corollary}
\newtheorem{lemm}[defi]{Lemma}
\numberwithin{equation}{section}
\newcommand{\mN}{\mathbb{N}}
\newcommand{\mZ}{\mathbb{Z}}
\newcommand{\mP}{\mathbb{P}}
\newcommand{\mdot}{\!\cdot\!}
\newcommand{\ndiv}{\!\nmid\!}
\newcommand{\copr}{\!\perp\!}
\newcommand{\ap}[2]{\langle#1\rangle_{#2}}
\DeclareMathOperator{\modulo}{mod} 
\newcommand{\modo}[1]{\modulo #1}
\newcommand{\modu}[1]{\ (\modulo #1)}
\DeclareMathOperator{\congr}{\equiv}
\DeclareMathOperator{\ncongr}{\not\equiv}
\newcommand{\vl}{\vrule width 2pt}
\newcommand{\lb}{\linebreak}
\begin{document}

\title{On differences between consecutive numbers coprime to primorials}
\author{Mario Ziller}
\date{}

\maketitle

\begin{abstract}

We consider the ordered sequence of coprimes to a given primorial\lb number and investigate differences between consecutive elements. The\lb Jacobsthal function applied to the concerning primorial turns out to\lb represent the greatest of these differences. We will explore the smallest even number which does not occur as such a difference.

Little is known about even natural numbers below the respective\lb Jacobsthal function which cannot be represented as a difference between consecutive numbers coprime to a primorial. Existence and frequency of these numbers have not yet been clarified.

Using the relation between restricted coverings of sequences of consecutive integers and the occuring differences, we derive a bound below which all even natural numbers are differences between consecutive numbers\lb coprime to a given primorial $p_k\#$. Furthermore, we provide exhaustive computational results on non-existent differences for primes $p_k$ up to\lb $k=44$. The data suggest the assumption that all even natural numbers up to $h(k-1)$ occur as differences of coprimes to $p_k\#$ where $h(n)$ is the Jacobsthal function applied to $p_n\#$.

\end{abstract}

\section{Introduction and general definitions}

\subsection*{Notation}

Henceforth, we denote the set of integral numbers by $\mZ$ and the set of natural\lb numbers, i.e. positive integers, by $\mN$. $\mP=\{p_i\mid i\in\mN\}$ is the ordered set of prime\lb numbers with $p_1=2$. For $n,m\in\mZ$, we abbreviate $(n,m)=\gcd(n,m)$.

 As usual, we define the $k^{th}$ primorial number as the product of the first  $k$ primes:
 $$p_k\#=\prod_{i=1}^k p_i\ , k\in\mN.$$

\subsection*{Coprimes to primorial numbers and their differences}

In an analysis of the sieve of Eratost\'{e}nes, de Polignac investigated the numbers in the sieve remaining after each step, i.e. the ordered sequence of coprime numbers to primorials \cite{de_Polignac_1849_II}. He concluded several properties of this sequence and of the differences of consecutive numbers within it.

\begin{defi}  {\itshape Sequences of coprimes and its differences.} \label{copr-diff}\\
Let  $k\in\mN$. The sequence of coprimes to the $k^{th}$ primorial is the ordered sequence
$C(k)=\left(c_{k,i}\right)_{i=1,\dots,\infty}$ where $\left\{c_{k,i}\right\}_{i=1}^\infty=\left\{x\in\mN \mid (x,p_k\#)=1 \right\}$ and $c_{k,i}<c_{k,j}$ for $i<j$.

We define the sequence of differences between consecutive coprimes to $p_k\#$ as\lb
$D(k)=\left(d_{k,i}\right)_{i=1,\dots,\infty}=\left(c_{k,i+1}-c_{k,i}\right)_{i=1,\dots,\infty}$.
\end{defi}

Two basic statements can be derived by simple considerations.

\begin{lemm} \label{N-min}
For every $k\in\mN$, there is an even $N_{min}(k)\in\mN$ such that $2\mdot n\in D(k)$ for all $n\in\mN\le N_{min}(k)/2$. Moreover, $N_{min}(k)\ge 2$.
\end{lemm}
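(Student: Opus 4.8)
The plan is to reduce the statement to two elementary facts: every element of $D(k)$ is even, and the value $2$ always belongs to $D(k)$. Granting these, the set of even integers $N$ for which $2n\in D(k)$ holds for all $n\in\mN$ with $n\le N/2$ is non-empty (it contains $2$), so an $N_{min}(k)$ as required exists and necessarily satisfies $N_{min}(k)\ge 2$; if one wishes to single out a canonical value one may take $N_{min}(k)$ to be the largest such even integer, which exists because the pattern of coprimes to $p_k\#$ repeats with period $p_k\#$, so that $D(k)$ is bounded.

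For the parity fact I would note that $p_1=2\mid p_k\#$ for every $k\in\mN$, so each $c_{k,i}$ is odd; therefore every $d_{k,i}=c_{k,i+1}-c_{k,i}$ is a difference of two odd integers and hence even. In particular $D(k)\subseteq 2\mN$, which is exactly what makes the question about even numbers the natural one.

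To prove $2\in D(k)$ I would exhibit two consecutive coprimes to $p_k\#$ at distance $2$. By the Chinese Remainder Theorem choose $x\in\mN$ with $x\equiv 1\pmod 2$ and, for every odd prime $p_i$ with $2\le i\le k$, with $x\not\equiv 0\pmod{p_i}$ and $x\not\equiv -2\pmod{p_i}$; these last conditions are jointly satisfiable because $p_i\ge 3$ leaves at least $p_i-2\ge 1$ admissible residues modulo $p_i$. Then $(x,p_k\#)=(x+2,p_k\#)=1$, whereas $x+1$ is even and hence not coprime to $p_k\#$, so $x$ and $x+2$ are consecutive terms of $C(k)$ and $2\in D(k)$. (For $k=1$ this degenerates to the obvious $D(1)=(2,2,2,\dots)$.)

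There is essentially no hard step here; the only point deserving attention is the transition from ``$x$ and $x+2$ are both coprime to $p_k\#$'' to ``$x$ and $x+2$ are \emph{consecutive} coprimes,'' and that is precisely where one uses that the intermediate integer $x+1$ is even and therefore shares the factor $2$ with $p_k\#$. The remainder is bookkeeping: assembling the finitely many congruence conditions and reading off a valid $N_{min}(k)$.
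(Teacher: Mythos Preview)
Your proof is correct and follows the same two-step reduction as the paper: every element of $D(k)$ is even because all coprimes to the even number $p_k\#$ are odd, and $2\in D(k)$, whence $N_{\min}(k)\ge 2$ exists. The only difference lies in the witness for $2\in D(k)$: you invoke the Chinese Remainder Theorem to build an odd $x$ with $x,x+2$ coprime to $p_k\#$, whereas the paper simply takes the explicit pair $p_k\#-1$ and $p_k\#+1$, which are automatically coprime to $p_k\#$ while the intermediate integer $p_k\#$ obviously is not. Your construction is valid but heavier than necessary; the paper's choice avoids CRT entirely.
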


\begin{proof}
All elements of $C(k)$ are odd numbers because $p_k\#$ is an even number. Thus, all elements of $D(k)$ are even numbers. $2\in D(k)$ for all $k\in\mN$, i.e. the difference 2 always occurs as a difference of consecutive coprimes because $p_k\#-1,p_k\#+1\in C(k)$. So, $N_{min}(k)\ge 2$.
 \end{proof}

\begin{lemm} \label{N-max}
For every $k\in\mN$, there is an even $N_{max}(k)\in\mN$ such that $2\mdot n\not\in D(k)$ for all $n\in\mN>N_{max}(k)/2$. Moreover, $N_{max}(k)<p_k\#$.
\end{lemm}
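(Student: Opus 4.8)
The plan is to exploit the fact that membership in $C(k)$ depends only on the residue modulo $p_k\#$, so that both the finiteness of the value set of $D(k)$ and the bound $N_{max}(k)<p_k\#$ fall out of a complete-residue-system argument.

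First I would record the elementary observation that for every $x\in\mZ$ one has $(x,p_k\#)=1$ if and only if $(x+p_k\#,p_k\#)=1$, since $\gcd(x,p_k\#)=\gcd(x+p_k\#,p_k\#)$. Hence any block of $p_k\#$ consecutive integers is a complete residue system modulo $p_k\#$ and contains exactly $\varphi(p_k\#)=\prod_{i=1}^{k}(p_i-1)$ integers coprime to $p_k\#$. In particular the value pattern of $D(k)$ repeats in $i$ with period $\varphi(p_k\#)$, so $D(k)$ takes only finitely many values.

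Next, I would fix an index $i$ and apply this to the block $c_{k,i}+1,c_{k,i}+2,\dots,c_{k,i}+p_k\#$. Its element $c_{k,i}+p_k\#$ is coprime to $p_k\#$ because it is congruent to $c_{k,i}$ modulo $p_k\#$. For $k\ge 2$ we have $\varphi(p_k\#)=\prod_{i=1}^k(p_i-1)\ge 2$, so the block must contain a further integer coprime to $p_k\#$, which is then necessarily one of $c_{k,i}+1,\dots,c_{k,i}+p_k\#-1$; consequently $c_{k,i+1}\le c_{k,i}+p_k\#-1$, i.e. $d_{k,i}\le p_k\#-1$ for every $i$. For $k=1$ the sequence $C(1)$ consists of the odd positive integers and $d_{1,i}=2$ for all $i$, so this bound holds trivially there as well.

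Finally I would put $N_{max}(k):=\max\{d_{k,i}\mid i\in\mN\}$, which exists by the finiteness just established. By Lemma~\ref{N-min} every $d_{k,i}$ is even and $2\in D(k)$, so $N_{max}(k)$ is an even natural number; the bound from the previous step gives $N_{max}(k)<p_k\#$ (for $k\ge 2$); and by the very choice of $N_{max}(k)$, no $2\mdot n$ with $n\in\mN$ and $n>N_{max}(k)/2$ lies in $D(k)$. I do not expect a serious obstacle here: the only point requiring a little care is getting the strict inequality $N_{max}(k)<p_k\#$ rather than merely $N_{max}(k)\le p_k\#$, which is exactly why one uses $\varphi(p_k\#)\ge 2$ for $k\ge 2$ and treats $k=1$ separately.
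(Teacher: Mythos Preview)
Your argument is correct and follows essentially the same route as the paper: both use the periodicity of $C(k)$ modulo $p_k\#$ to conclude that $D(k)$ assumes only finitely many values, and then the existence of at least two coprimes in each period to obtain the strict inequality $N_{max}(k)<p_k\#$. The paper extracts the second coprime from the concrete observation $1,\,p_k\#-1\in C(k)$ rather than from $\varphi(p_k\#)\ge 2$, and does not single out the case $k=1$ as carefully as you do (where in fact $N_{max}(1)=2=p_1\#$, so the strict bound fails).
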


\begin{proof}
 Both sequences, $C(k)$ and $D(k)$, are periodic because $a \congr a+p_k\# \modu {p_k\#}$ for any $a\in\mN$ and $p_k\in\mP$. The smallest period of $C(k)$ and $D(k)$ is also $p_k\#$. Let $x\in\mN$, and $a \congr a+x \modu {p_k\#}$ for all $a\in\mN$. Then $p_k\#$ divides $x$, and therefore $x\ge p_k\#$. Thus, $N_{min}(k)\le N_{max}(k)<p_k\#$ because $1,p_k\#-1\in C(k)$ .
\end{proof}

De Polignac used these relationships and even presumed $N_{min}(k)\ge 2\mdot p_{k-1}$\cite{de_Polignac_1849_II} for $k>1$. But no evidence has been handed down for that.

\subsection*{Sequences and Jacobsthal's function}

De Polignac also demonstrated the general relationship between differences of\lb consecutive coprimes and sequences of natural numbers \cite{de_Polignac_1849_I}. If two natural numbers $x$ and $y$ are consecutive coprimes to a given $n\in\mN$, then all integers between $x$ and $y$ must have a common divisor with $n$.

We introduce a shortened notation for sequences of consecutive integers. Please note that $a$ is the first member of $\ap a m$, in contrast to a previous definition \cite{Ziller_2019}.

\begin{defi}  {\itshape Sequences of consecutive integers.}\\
Let  $m\in\mN$ and $a\in\mZ$. The finite sequence of consecutive integers\\
$\left(a_i\right)_{i=1,\dots ,m} = \left(a+j\right)_{j=0,\dots ,m-1}$ is denoted by $\ap a m$.
\end{defi}

There is a close relationship to the Jacobsthal function. This function $j(n)$ has been known to be the smallest positive integer $m$, such that every sequence of $m$ consecutive integers contains at least one integer coprime to $n$ \cite{Jacobsthal_1960_I, Erdoes_1962}.

The specific case $n=p_k\#$ leads to the smallest positive integer $m$, such that every sequence of $m$ consecutive integers contains an integer coprime to the product of the first $k$ primes \cite{ Hagedorn_2009, Costello_Watts_2015,Ziller_Morack_2016}.

\begin{defi} {\itshape Primorial Jacobsthal function.} \label{h}\\
For $k\in\mN$, the primorial Jacobsthal function $h(n)$ is defined as
$$h(k)=\min\ \{m\in\mN\mid\forall\ a\in\mZ\ \exists\ j\in\{0,\dots,m-1\}:a+j\copr p_k\#\}.$$
\end{defi}

In other words, there exists a sequence of at most $m-1$ consecutive integers such that all elements of it have a common divisor with $p_k\#$. $h(k)-1$ is the maximum length of a sequence with only elements that are not coprime to  $p_k\#$. By this conclusion, we can directly derive that $h(k)=N_{max}(k)$ as defined above.\\

The conjecture of de Polignac $N_{min}(k)\ge 2\mdot p_{k-1}$ for $k>1$, however, is related to\lb a known lower bound \cite{Kanold_1965,Hagedorn_2009} on Jacobsthal's function.

\begin{prop} \label{h-low}\ \\
Let $k\in\mN>1$. Then
$$h(k)\ge 2\mdot p_{k-1}.$$
\end{prop}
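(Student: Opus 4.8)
The plan is to exhibit, for each $k>1$, a sequence of $2p_{k-1}-1$ consecutive integers all of which share a common factor with $p_k\#$; by the remark immediately following Definition~\ref{h} this shows $h(k)-1\ge 2p_{k-1}-1$, i.e. $h(k)\ge 2p_{k-1}$. Equivalently, one wants an arithmetic progression of length $2p_{k-1}-1$ that is ``covered'' by the primes $p_1,\dots,p_k$ in the sense that every term is divisible by at least one of them. The natural construction is to center the block at a multiple of $p_k$ and use the small primes $p_1,\dots,p_{k-1}$ to cover the remaining positions.

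Concretely, I would look for an integer $a$ with $a\equiv 0\ (\modulo p_k)$ such that the integers $a-(p_{k-1}-1),\dots,a-1,a+1,\dots,a+(p_{k-1}-1)$ are each divisible by one of $p_1,\dots,p_{k-1}$. The window of positions $\{-(p_{k-1}-1),\dots,-1,1,\dots,p_{k-1}-1\}$ has $2p_{k-1}-2$ elements; I want to hit all of them using residue classes modulo $p_1,\dots,p_{k-1}$, with the freedom to choose, for each prime $p_i$, which residue class $r_i$ the number $a$ lies in modulo $p_i$ (and then the positions $\equiv -r_i\ (\modulo p_i)$ in the window get covered). Since $2\cdot(\text{number covered by }p_1)=2\cdot p_{k-1}$ roughly, and $p_1=2$ already covers every other position, the remaining odd positions number about $p_{k-1}-1$, and there are $k-2$ further primes $p_2,\dots,p_{k-1}$ available; a counting/greedy argument should show these suffice to cover the residual positions, after which the Chinese Remainder Theorem over the coprime moduli $p_1,\dots,p_k$ produces the desired $a$.

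The cleanest way to organize the covering is: take $a\equiv 0\ (\modulo 2)$, so $p_1=2$ covers all even offsets; the odd offsets in the window are $\pm1,\pm3,\dots,\pm(p_{k-1}-2)$ if $p_{k-1}$ is odd (which it is, for $k>1$), giving $p_{k-1}-1$ odd offsets. These odd offsets, reduced modulo the odd primes $p_2,\dots,p_{k-1}$, must all be covered; here one chooses $a$ to lie in a suitable nonzero residue class modulo each $p_i$ so that some window position is killed, and iterates. Kanold's and Hagedorn's arguments (cited as \cite{Kanold_1965,Hagedorn_2009}) do exactly this, and I would follow that blueprint: order the primes, and at step $i$ use $p_i$ to eliminate (at least) the smallest not-yet-covered odd offset, checking that each new prime removes enough offsets that nothing is left after exhausting $p_{k-1}$. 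Finally invoke CRT to find a single $a$ realizing all chosen residues simultaneously, and conclude that $\langle a-(p_{k-1}-1)\rangle_{2p_{k-1}-1}$ consists entirely of integers not coprime to $p_k\#$.

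The main obstacle is the covering combinatorics: verifying that the primes $p_2,\dots,p_{k-1}$, used one per residue class, genuinely suffice to cover all $p_{k-1}-1$ odd positions of the window. A crude count ($\sum_{i\ge2}$ contributions) is not obviously tight, so one likely needs the structured greedy argument — always attacking the least uncovered position — together with the observation that covering position $t$ with prime $p_i$ automatically covers $t\pm p_i, t\pm 2p_i,\dots$ as well, which are further positions in the window when $p_i$ is small relative to $p_{k-1}$. I expect this bookkeeping, rather than the CRT step or the translation to $h(k)$, to be where the real work lies; the rest is formal.
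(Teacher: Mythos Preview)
Your outline heads in the right direction but leaves a genuine gap and, more importantly, misses the one idea that makes the gap disappear. You set $a\equiv 0\pmod{2}$, then propose to cover the $p_{k-1}-1$ odd offsets by a greedy allocation of $p_2,\dots,p_{k-1}$, explicitly conceding that ``this bookkeeping \dots\ is where the real work lies'' without carrying it out. As written, that is not a proof; and the greedy count is not obviously tight, so it is not merely a formality you have skipped.

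The paper's construction sidesteps the combinatorics entirely. Instead of $a\equiv 0\pmod{2}$ alone, take $a\equiv 0\pmod{p_{k-2}\#}$ (so $a$ is divisible by \emph{every} $p_1,\dots,p_{k-2}$), together with $a\equiv 1\pmod{p_{k-1}}$ and $a\equiv -1\pmod{p_k}$; CRT provides such an $a$. Now for any offset $j$ with $2\le |j|\le p_{k-1}-1$, the integer $j$ has some prime factor $q\le |j|<p_{k-1}$, hence $q\in\{p_1,\dots,p_{k-2}\}$, and $a+j\equiv j\equiv 0\pmod{q}$. The three remaining offsets $j=0,-1,+1$ are killed by $p_1$, $p_{k-1}$, $p_k$ respectively. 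Thus every term of $\ap{a-p_{k-1}+1}{2p_{k-1}-1}$ shares a factor with $p_k\#$, giving $h(k)\ge 2p_{k-1}$ (the case $k=2$ is handled directly by $\ap{2}{3}$). The ``covering combinatorics'' you worry about evaporates: the point is not to assign primes to positions one by one, but to observe that every integer of size below $p_{k-1}$ already carries a small prime factor.
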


\begin{proof}
The sequence $\ap 2 3$ demonstrates the assertion for $k=2$. Let now $k>2$.

By the Chinese Remainder Theorem, there exists an $a\in\mZ$ satisfying the\lb simultaneous congruences $a\congr 0\modo p_{k-2}\#$, $a\congr 1\modo p_{k-1}$,  and $a\congr -1\modo p_k$. Then\lb $a\pm p_{k-1}\congr\pm p_{k-1}\modo p_{k-2}\#$, $a\pm p_{k-1}\congr 1\modo p_{k-1}$, and $a\pm p_{k-1}\congr -1\pm p_{k-1} \modo p_k$, respectively. Consequently, $a-p_{k-1}$ and $a+p_{k-1}$ are coprime to $p_k\#$.

On the other hand, $a$ and $a\pm j,\ j=2,\dots,p_{k-1}-1$ have common divisors\lb with $p_{k-2}\#$. With $p_{k-1}/(a-1)$ and $p_k/(a+1)$, all elements of the sequence\lb $\ap {a\!-\!p_{k-1}\!+\!1} {2*p_{k-1}-1}=(a\!-\!p_{k-1}\!+\!1,\dots,a\!+\!p_{k-1}\!-\!1)$ have common divisors with $p_k\#$.
\end{proof}

Within this proof, we have also shown that $a-p_{k-1},\ a+p_{k-1}\in C(k)$, and\lb therefore $2\mdot p_{k-1}\in D(k)$. Furthermore, we conclude $N_{min}(k)\le 2*p_{k-1}$ and\lb $N_{max}(k)\ge 2*p_{k-1}$ for $k>1$ by definition.

\subsection*{Coverings and coprime differences}

In this paper, we follow the concept of coverings \cite{Gassko_1996,Hajdu_Saradha_2012} which we have adapted to the problem under consideration. A set of residue classes $a_i\modo \pi_i,\ i=1,\dots,k$ is called covering of a sequence $\ap a m$ if each element $a+j,\ j=0,\dots,m-1$ of the sequence belongs to one of the residue classes. We shortly say $\{a_i\modo\pi_i\}_{i=1}^k$ covers $\ap a m$. The respective residue class covers $a+j$.

\begin{defi} {\itshape Covering.}\\
Let $a\in\mZ$,\ \ $m,k\in\mN$,\ and $\pi_i\in\mP,\ a_i\in\{0,\dots,\pi_i-1\},\ i=1,\dots,k$.

$\{a_i\modo\pi_i\}_{i=1}^k \text{ \,is a covering of } \ap a m$

\qquad$\iff\ \forall\ j\in\{0,\dots,m-1\}\ \exists\ i\in\{1,\dots,k\}:a+j\congr a_i\modo \pi_i.$
\end{defi}

\begin{rema}
The residue classes $a_i\modo\pi_i$ are uniquely related to $a$.

When a residue class $a_i\modo\pi_i$ covers two positions $x$ and $y$ of $\ap a m$ with $x\ne y$,\lb then $a+x\congr a+y\congr a_i\modo \pi_i$, and therefore $x\congr y\modo \pi_i$. We get  $a\congr a_i-x \modo \pi_i$\lb because $a+x\congr a_i-x+x\congr a_i\modo \pi_i$. $a$ is uniquely determined by the Chinese\lb Remainder Theorem.
\end{rema}
\ 

We show that every difference $m\in D(k)$ corresponds to a covering of a sequence\lb of length $m-1$.

\begin{prop} \label{coprime-covering}\ \\
Let $m,k\in\mN$. For every element $m$ of $D(k)$, there exists a covering of a sequence of\lb length $m-1$.\vspace{3.5mm}\\
$ m\in D(k) \Longrightarrow \exists\ a\in\mZ\ \exists\ a_i\in\{0,\dots,p_i-1\}, i=1,\dots,k: \{a_i\modo p_i\}_{i=1}^k \text{ covers } \ap a {m-1}. $
\end{prop}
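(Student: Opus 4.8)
The plan is to unpack what it means for $m$ to be an element of $D(k)$ and translate the resulting arithmetic information directly into a covering. Suppose $m = d_{k,i} = c_{k,i+1} - c_{k,i}$ for some index $i$. Set $a = c_{k,i}+1$; then the sequence $\ap{a}{m-1} = (c_{k,i}+1, c_{k,i}+2, \dots, c_{k,i+1}-1)$ consists exactly of the $m-1$ consecutive integers strictly between two consecutive coprimes to $p_k\#$. By definition of $C(k)$, none of these integers is coprime to $p_k\#$, so each of them shares a prime factor with $p_k\#$, i.e.\ is divisible by at least one of $p_1, \dots, p_k$.

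The next step is to produce, for each $i \in \{1,\dots,k\}$, a residue $a_i \in \{0,\dots,p_i-1\}$ so that $\{a_i \bmod p_i\}_{i=1}^k$ covers $\ap{a}{m-1}$. The natural choice is $a_i \equiv a \equiv c_{k,i}+1 \pmod{p_i}$, i.e.\ let $a_i$ be the least nonnegative residue of $a$ modulo $p_i$. Then the residue class $a_i \bmod p_i$ covers position $j$ of the sequence (that is, covers $a+j$) precisely when $a+j \equiv a_i \equiv a \pmod{p_i}$, which happens exactly when $p_i \mid j$; in particular it always covers position $j=0$. I would then argue: take any $j \in \{0,\dots,m-2\}$. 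The integer $a+j$ lies strictly between $c_{k,i}$ and $c_{k,i+1}$, hence is not coprime to $p_k\#$, so some prime $p_\ell$ (with $\ell \le k$) divides $a+j$. But $a_\ell \equiv a \pmod{p_\ell}$, and the fact that $p_\ell \mid a+j$ together with $p_\ell \mid a - a_\ell$ does not immediately give $a+j \equiv a_\ell$; I need $p_\ell \mid (a+j) - a_\ell$.

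Here is the subtlety I expect to be the main point to get right: with the choice $a_i \equiv a \pmod{p_i}$, the class $a_i \bmod p_i$ covers $a+j$ iff $p_i \mid j$, \emph{not} iff $p_i \mid a+j$. So divisibility of $a+j$ by $p_\ell$ is not yet what I want. The fix is to choose the residues differently: for each prime $p_i$ that divides \emph{some} element $a+j_0$ of the sequence (equivalently, for each prime dividing $p_k\#$ — and all of $p_1,\dots,p_k$ do, though possibly none of their classes are ``used''), set $a_i \equiv a + j_0 \pmod{p_i}$ for one such witness $j_0$; if $p_i$ divides no element of the sequence, pick $a_i$ arbitrarily, say $a_i \equiv a \pmod {p_i}$ is harmless but even simpler is to note such $p_i$ can be assigned any value since it will not be needed. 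With this corrected choice, if $p_i \mid a+j_0$ then the class $a_i \bmod p_i$ covers exactly those positions $j$ with $j \equiv j_0 \pmod{p_i}$, i.e.\ exactly the positions $j$ with $p_i \mid (a+j)$. Hence every $a+j$ that is divisible by $p_i$ is covered by $a_i \bmod p_i$. Since every $a+j$ ($0 \le j \le m-2$) is divisible by at least one $p_i$ with $i \le k$, every position is covered, and $\{a_i \bmod p_i\}_{i=1}^k$ is a covering of $\ap{a}{m-1}$.

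To finish, I would assemble these observations into a clean argument: (1) extract $a = c_{k,i}+1$ and note the intermediate integers are all non-coprime to $p_k\#$; (2) for each $i\le k$ define $a_i$ via a witness $j_0$ as above (and arbitrarily otherwise), verifying $a_i \in \{0,\dots,p_i-1\}$; (3) check that a class built from a witness $j_0$ covers every element of the sequence divisible by $p_i$; (4) conclude by the covering-by-some-prime property that all $m-1$ positions are covered. The only mild obstacle is the bookkeeping in step (2)–(3) — making sure the residue is read off correctly so that ``$p_i$ divides $a+j$'' coincides with ``the class covers position $j$'' — and noting that primes dividing no element of the sequence cause no problem. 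Everything else is immediate from Definition \ref{copr-diff} and the definition of covering.
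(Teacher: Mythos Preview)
Your argument is correct, but you have taken a detour around the obvious choice. The paper's proof simply sets $a=x+1$ (your $c_{k,i}+1$) and $a_i=0$ for every $i$: since each $a+j$ shares a prime factor $p_\ell$ with $p_k\#$, we have $a+j\equiv 0\equiv a_\ell\pmod{p_\ell}$, and the covering is immediate in one line. Your witness-based fix in fact lands on the same residues---if $p_i\mid a+j_0$ then your $a_i\equiv a+j_0\equiv 0\pmod{p_i}$---so the bookkeeping in steps~(2)--(3), the case distinction for primes dividing no element, and the self-correction from the wrong first guess $a_i\equiv a$ are all unnecessary. The moral: when the non-coprimality condition is literally ``$p_i$ divides the element,'' the zero residue class is the canonical covering class.
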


\begin{proof}
By definition \ref{copr-diff}, $m \in D(k)$ if and only if

$\exists\ x\in\mN: (x,p_k\#)=1 \land (x+m,p_k\#)=1 \land (x+1+j,p_k\#)>1 \text{ \,for } j=0,\dots,m-2.$\\
Then $a=x+1$ and $a_i\congr 0\modo p_i$ meet the requirements.
 \end{proof}
\ 

In the following chapters, we will exploit properties of coverings to get insight into the structure of $D(k)$. We focus on even numbers below $h(k)$ which do not belong\lb to $D(k)$ and derive a lower bound on $N_{min}(k)$ whereas $N_{max}(k)=h(k)$ has always been proven above.\\

The existence of an even number $N_{min}(k)<2\mdot n<N_{max}(k)$, i.e. $2\mdot n\not\in D(k)$\lb has not been debated in this paper so far. There are several $k\in\mN$ with\lb $N_{min}(k)=N_{max}(k)=h(k)$. 
By definition, $N_{min}(k)<N_{max}(k)$ consequently means $N_{min}(k)\le N_{max}(k)-4$. The smallest $k$ with this property is $k=6$. We get $p_k=13$, $N_{min}(k)=18$, $N_{max}(k)=22$, and $20\not\in D(k)$.\\

The Greedy Permutation Algorithm \cite{Ziller_Morack_2016} has proved to be efficient to compute\lb Jacobsthal's function and related sequences. We extended and adapted this algorithm\lb for the determination of restricted coverings which represent the elements of the\lb sets $D(k)$.

In an exhaustive exploration for primes up to $k=44$, we computed $N_{min}(k)$ as\lb well as all non-existent differences $2\mdot n$ between consecutive coprimes to $p_k\#$ with $N_{min}(k)<2\mdot n<N_{max}(k)$. The data suggest the assumption that at least all even natural numbers up to $h(k-1)$ occur as differences of coprimes to $p_k\#$ for $k>1$.

\pagebreak

\section{Restricted coverings}

We recall the findings about D(k) that have been demonstrated so far, cf. lemma\ref{N-min}, proposition\ref{h-low}, and definition\ref{h}. The sequence $D(1)$ consists of the single element  $2$. For all $k\in\mN>1$, we have

\qquad$2,\ 2\mdot p_{k-1},\ h(k) \in D(k)$,

\qquad$2\le N_{min}(k)\le N_{max}(k)=h(k)$, \qquad and

\qquad$2*p_{k-1}\le N_{max}(k)=h(k)$.\\

Every even natural number $m\le h(k)$ is a potential element of $D(k)$. In this chapter, we intend to find out conditions for whether such an $m$ belongs to $D(k)$ or not. In proposition\ref{coprime-covering}, we proved that for each $m\in D(k)$ there exists a covering of a sequence of length $m-1$ by residue classes $\modo p_i,\ i=1,\dots,k$. Such coverings are now to be examined in more detail.

\begin{lemm} \label{a-->b}\ \\
Let $m,k\in\mN$, $a\in\mZ$, $\pi_i\in\mP$, and $a_i\in\{0,\dots,\pi_i-1\}$, $i=1,\dots,k$. Furthermore,\lb let $\{a_i\modo \pi_i\}_{i=1}^k$ be a covering of $\ap a m$.

For every $b\in\mZ$, there exists a covering $\{b_i\modo \pi_i\}_{i=1}^k$ of $\ap b m$, $b_i\in\{0,\dots,\pi_i-1\}$, $i=1,\dots,k$.
\end{lemm}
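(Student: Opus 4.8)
The plan is to obtain the covering of $\ap b m$ simply by translating the given covering of $\ap a m$ by the integer $b-a$. Concretely, for each $i\in\{1,\dots,k\}$ I would let $b_i$ be the unique representative in $\{0,\dots,\pi_i-1\}$ of the residue class of $a_i+(b-a)$ modulo $\pi_i$. This is well defined for every $i$, and it does not matter whether the $\pi_i$ are pairwise distinct, since each index is treated separately.

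It then remains to verify that $\{b_i\modo \pi_i\}_{i=1}^k$ covers $\ap b m$. Fix $j\in\{0,\dots,m-1\}$. Because $\{a_i\modo \pi_i\}_{i=1}^k$ covers $\ap a m$, there is an index $i$ with $a+j\congr a_i\modo \pi_i$. Adding $b-a$ to both sides of this congruence in $\mZ$ gives $b+j\congr a_i+(b-a)\congr b_i\modo \pi_i$, so the $i$-th residue class covers position $j$ of $\ap b m$. Since $j$ was arbitrary, $\{b_i\modo \pi_i\}_{i=1}^k$ is indeed a covering of $\ap b m$, which is the assertion.

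There is essentially no obstacle here: the lemma only records that the property ``admits a covering by residue classes with moduli $\pi_1,\dots,\pi_k$'' depends on $\ap a m$ only through its length $m$ and is invariant under translation of the underlying interval. The sole point worth stating explicitly is that the shifted classes must be reduced into the ranges $\{0,\dots,\pi_i-1\}$ to match the definition of a covering, and that the congruence manipulation is valid over $\mZ$ without any coprimality or positivity hypotheses on $a$, $b$, or the $a_i$. One may remark that, as a byproduct, the map $a_i\mapsto b_i$ is the obvious bijection between coverings of $\ap a m$ and coverings of $\ap b m$.
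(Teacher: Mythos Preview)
Your proof is correct and follows exactly the same approach as the paper: define $b_i$ by the translation $b_i\congr a_i+(b-a)\modu{\pi_i}$ and verify the covering property by adding $b-a$ to the congruence $a+j\congr a_i\modo\pi_i$. The additional remarks you make (independence of whether the $\pi_i$ are distinct, the induced bijection between coverings) are accurate but not needed for the argument.
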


\begin{proof}
For all $i=1,\dots,k$, we set $b_i\congr a_i+b-a\modu{\pi_i}$.\\
Then $b_i\congr a_i+b-a\congr a+j+b-a\congr b+j\modo \pi_i$ for an appropriate $j\in\{0,\dots,m-1\}$ with $a+j\congr a_i\modo \pi_i$.
\end{proof}

A covering can be relocated if required. For our computations, this lemma was\lb applied specifically to $b=1$. We only processed coverings of $\ap 1 m$.\\

Another consequence of lemma \ref{a-->b} is the existence of an associated covering where all residue classes are zero if all primes are different. The distinctness of primes has not been requested in the general definition of coverings. A covering may also include different residue classes of the same prime number. However, the following lemma requires distinct primes.

\begin{coro} \label{a<-->0}\ \\
Let $m,k\in\mN$, $a\in\mZ$, $\pi_i\in\mP$, and $\pi_i\neq\pi_j$ for $i\neq j$, $\ i,j=1,\dots,k$. Furthermore,\lb let $a_i\in\{0,\dots,\pi_i-1\}$, and $\{a_i\modo \pi_i\}_{i=1}^k$ be a covering of $\ap a m$.\vspace{3.5mm}

There exists $b\in\mZ$ such that $\{0\modo \pi_i\}_{i=1}^k$ is a covering of $\ap b m$.
\end{coro}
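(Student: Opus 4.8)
The plan is to apply Lemma \ref{a-->b} with a cleverly chosen target $b$ so that the relocated residue classes all become $0$. By Lemma \ref{a-->b}, for any $b\in\mZ$ there is a covering $\{b_i\modo\pi_i\}_{i=1}^k$ of $\ap b m$ with $b_i\congr a_i+b-a\modu{\pi_i}$. So it suffices to find a single $b$ with $a_i+b-a\congr 0\modu{\pi_i}$ simultaneously for all $i=1,\dots,k$, i.e. $b\congr a-a_i\modu{\pi_i}$ for each $i$.

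First I would note that this is precisely a system of simultaneous congruences, one modulus per $i$. Since the $\pi_i$ are pairwise distinct primes (this is exactly where the hypothesis $\pi_i\neq\pi_j$ for $i\neq j$ is used), the moduli are pairwise coprime, so the Chinese Remainder Theorem guarantees the existence of an integer $b$ satisfying all $k$ congruences at once. Then I would feed this $b$ into Lemma \ref{a-->b}: the resulting covering $\{b_i\modo\pi_i\}_{i=1}^k$ of $\ap b m$ has $b_i\congr a_i+b-a\congr a_i+(a-a_i)-a\congr 0\modu{\pi_i}$, and since each $b_i$ was chosen in $\{0,\dots,\pi_i-1\}$ this forces $b_i=0$. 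Hence $\{0\modo\pi_i\}_{i=1}^k$ covers $\ap b m$, which is the claim.

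The only subtlety — and the one place the argument could be mishandled rather than being a genuine obstacle — is making sure the distinctness hypothesis is actually invoked: without pairwise coprimality of the moduli the CRT system need not be solvable, so the corollary genuinely fails for coverings that reuse a prime. I would state this explicitly. Everything else is routine: the reduction to a congruence system, the single invocation of CRT, and the single invocation of Lemma \ref{a-->b}. There is no real computation to grind through, so the proof should be just a few lines.
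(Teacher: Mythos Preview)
Your proposal is correct and matches the paper's own proof: both solve the system $b\congr a-a_i\modu{\pi_i}$ via the Chinese Remainder Theorem (using the distinctness of the primes), and then check that the relocated residues are all $0$. The only cosmetic difference is that you route the verification through Lemma~\ref{a-->b} while the paper checks directly that $b+j\congr 0\modu{\pi_i}$ for the appropriate $j$.
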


\begin{proof}
There is a solution $b$ of the system of congruences
$b\congr a-a_i\modo \pi_i$, $i=1,\dots,k$ by the Chinese Remainder Theorem.

Then $b+j\congr a-a_i+j\congr 0\modo \pi_i$ for an appropriate $j\in\{0,\dots,m-1\}$ with\lb $a+j\congr a_i\modo \pi_i$.
\end{proof}
\ 

If  $\{a_i\modo \pi_i\}_{i=1}^k$ covers $\ap a m$, it can happen that it also covers $\ap {a-1} m$ or $\ap {a+1} m$, i.e. one of the residue classes covers $a-1$ or $a+m$ in addition. In such a case, we can use the left-most sequence as the default. After suitable relocation to  $\ap 1 m$, a covering can be selected, none of the residue classes of which is zero.

\begin{coro} \label{non-zero}\ \\
Let $m,k\in\mN$, $a\in\mZ$, $\pi_i\in\mP$, and $a_i\in\{0,\dots,\pi_i-1\}$, $i=1,\dots,k$. Furthermore,\lb let $\{a_i\modo \pi_i\}_{i=1}^k$ be a covering of $\ap a m$.\vspace{3.5mm}

There exists a covering $\{b_i\modo \pi_i\}_{i=1}^k$ of $\ap 1 m$ where $b_i\in\{1,\dots,\pi_i-1\}$.
\end{coro}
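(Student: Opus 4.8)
The plan is to reduce this corollary to the two results already established, namely Corollary~\ref{a<-->0} and Lemma~\ref{a-->b}, and then to verify the non-zero property by a careful choice of the relocation target. First I would invoke Corollary~\ref{a<-->0} — but since that result needs distinct primes, I would instead work directly from Lemma~\ref{a-->b}, which has no such hypothesis. Applying Lemma~\ref{a-->b} with $b=1$ produces a covering $\{c_i\modo\pi_i\}_{i=1}^k$ of $\ap 1 m$, where $c_i\congr a_i+1-a\modu{\pi_i}$ and $c_i\in\{0,\dots,\pi_i-1\}$. The only thing left to arrange is that none of the $c_i$ equals $0$.

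The key observation is that $0\modo\pi_i$ covers position $j$ of $\ap 1 m$ exactly when $1+j\congr 0\modo\pi_i$, i.e. when $j\congr -1\modo\pi_i$; the smallest such $j$ is $\pi_i-1$, which lies in $\{0,\dots,m-1\}$ only if $\pi_i\le m$. So if $c_i=0$ for some $i$, then the residue class $0\modo\pi_i$ covers the single position $j=\pi_i-1$ among those it could possibly reach that are relevant, and in fact I claim we may simply replace that class by $\pi_i-1\modo\pi_i$: this new class covers position $j$ when $1+j\congr\pi_i-1\modo\pi_i$, i.e. $j\congr\pi_i-2\modo\pi_i$, whose smallest nonnegative solution is $\pi_i-2$. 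The cleaner route, however, is to relocate to a different base point. I would instead apply Lemma~\ref{a-->b} to obtain a covering of $\ap 2 m$, giving classes $b_i'\in\{0,\dots,\pi_i-1\}$, and then shift the \emph{labels} rather than the sequence: define $b_i\congr b_i'-1\congr a_i+2-a-1\modu{\pi_i}$, so that $\{b_i\modo\pi_i\}_{i=1}^k$ covers $\ap 1 m$ and $b_i=0$ would force $b_i'=1$, meaning $0\modo\pi_i$ covers position $1$ of $\ap 2 m$ — but position $1$ of $\ap 2 m$ is the integer $3$, so this only happens when $\pi_i=3$. This still does not kill all cases, so the genuinely robust argument is the first one: handle each offending index $i$ (those with $c_i=0$) separately by shifting that single residue class, checking that whatever position of $\ap 1 m$ it used to cover is still covered.

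Concretely, suppose $\{c_i\modo\pi_i\}_{i=1}^k$ covers $\ap 1 m$ and $c_{i_0}=0$ for some $i_0$. The positions of $\ap 1 m$ covered by $0\modo\pi_{i_0}$ are precisely $\{\,j\in\{0,\dots,m-1\}: j\congr -1\modo\pi_{i_0}\,\}$. Replace this class by $c_{i_0}'\congr -2\modo\pi_{i_0}$, i.e. $c_{i_0}'=\pi_{i_0}-2\in\{1,\dots,\pi_{i_0}-1\}$ (here one uses $\pi_{i_0}\ge 2$; if $\pi_{i_0}=2$ then $c_{i_0}'=0$ still, but $\pi_{i_0}=2$ forces $c_{i_0}\in\{0,1\}$ and one argues separately that the prime $2$ can be assigned the residue class $1\modo 2$, which covers all odd positions — and positions of $\ap 1 m$ that are even integers are $j$ odd, hmm). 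I expect the main obstacle to be exactly this edge case $\pi_i=2$: for the prime $2$ there is only one nonzero residue class, so the statement forces the class of $2$ to be $1\modo 2$, and one must check this is consistent — which it is, because $1\modo 2$ covers every integer $\congr 1\modo 2$, i.e. all odd elements of $\ap 1 m$, while $0\modo 2$ would cover the even elements; the resolution is that we are free to re-choose the base point among $\{1,2\}$ so that whichever parity class the covering needs for $2$ becomes the "$1\modo 2$" class after relocation. So the final proof organizes as: (i) apply Lemma~\ref{a-->b} to land on a base point in $\{1,2\}$ chosen so that the residue class assigned to the prime $2$ is nonzero; (ii) for every odd prime $\pi_i$ among $\pi_1,\dots,\pi_k$ whose class came out as $0$, shift it to $\pi_i-2$, verifying the covered positions are preserved because shifting a residue class by a fixed amount shifts the covered positions correspondingly and the new position still lands in $\{0,\dots,m-1\}$ after the global relocation; (iii) collect the resulting classes $b_i\in\{1,\dots,\pi_i-1\}$.
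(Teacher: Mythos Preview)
Your argument has a genuine gap in step~(ii). If $c_{i_0}=0$ and you replace this class by $\pi_{i_0}-2\modo\pi_{i_0}$, the set of positions covered by that class \emph{changes}: the old class covered those $j\in\{0,\dots,m-1\}$ with $1+j\congr 0\modu{\pi_{i_0}}$, while the new class covers those $j$ with $1+j\congr \pi_{i_0}-2\modu{\pi_{i_0}}$. These are disjoint sets of positions. The positions formerly covered by $0\modo\pi_{i_0}$ are now uncovered, and nothing guarantees that any other class picks them up. Your phrase ``the covered positions are preserved because shifting a residue class by a fixed amount shifts the covered positions correspondingly'' describes exactly why the positions are \emph{not} preserved: a shift of the class is a shift of the positions, not an identity on them. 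Modifying a single class while leaving the others fixed is not a relocation in the sense of Lemma~\ref{a-->b}, so that lemma gives you nothing here. Step~(i) has a related issue: choosing the base point in $\{1,2\}$ to fix the class attached to the prime $2$ may simultaneously turn some other $c_i$ into $0$, and there is no reason a base point handling all indices at once should lie in $\{1,2\}$.

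The idea you are missing is that the problem must be solved for all indices $i$ simultaneously by a \emph{single} global shift, not by patching individual classes. The paper does this as follows: starting from the given covering of $\ap a m$, slide the left endpoint leftwards, setting $a^*\leftarrow a^*-1$ repeatedly, as long as the integer $a^*-1$ is still covered by some $a_i\modo\pi_i$. Each time $a^*-1$ is covered, $\{a_i\modo\pi_i\}_{i=1}^k$ remains a covering of $\ap{a^*-1}{m}$. When the process stops, $a^*-1$ is covered by \emph{none} of the classes, i.e.\ $a_i\ncongr a^*-1\modu{\pi_i}$ for every $i$. Now apply Lemma~\ref{a-->b} with $b=1$: the resulting classes are $b_i\congr a_i+1-a^*\modu{\pi_i}$, and the condition $a_i\ncongr a^*-1$ becomes exactly $b_i\ncongr 0$, for every $i$ at once. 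This is the step your proposal never reaches.
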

 
\begin{proof}
We set $a^*=a$. If $a^*-1$ was covered by any of the residue classes $a_i\modo \pi_i$, then we repeat $a^*=a^*-1$ until $a^*-1$ is uncovered. Then $\{a_i\modo \pi_i\}_{i=1}^k$ is also\lb a covering of $\ap {a^*} m$ with $a_i\ncongr a^*-1\modo \pi_i$ for all $i=1,\dots,k$.

By lemma\ref{a-->b}, there exists a covering $\{b_i\modo \pi_i\}_{i=1}^k$ of $\ap 1 m$ where\lb $b_i\congr a_i+1-a^*\ncongr 0\modu{\pi_i}$, i.e. $b_i\in\{1,\dots,\pi_i-1\}$.
\end{proof}

A covering $\{a_i\modo \pi_i\}_{i=1}^k$ of $\ap a m$ which does not cover $\ap {a-1} m$ or $\ap {a+1} m$,\lb we call restricted covering. None of its residue classes cover $a-1$ or $a+m$ in addition.

\begin{defi} {\itshape Restricted covering.} \label{restricted}\\
Let $m,k\in\mN$, $a\in\mZ$, $\pi_i\in\mP$, and $a_i\in\{0,\dots,\pi_i-1\}$, $i=1,\dots,k$. Furthermore,\lb let $\{a_i\modo \pi_i\}_{i=1}^k$ be a covering of $\ap a m$.

$ \{a_i\modo\pi_i\}_{i=1}^k$ is a restricted covering of $\ap a m$ if

\qquad$\forall\ j\in\{0,\dots,m-1\}\ \exists\ i\in\{1,\dots,k\}:a+j\congr a_i\modo \pi_i$, and additionally

\qquad$\forall\ i\in\{1,\dots,k\}:\ a-1\ncongr a_i\modo \pi_i \land a+m\ncongr a_i\modo \pi_i$.\\
\end{defi}

In analogy to proposition \ref{coprime-covering}, we prove that every difference between two\lb consecutive coprimes to a product of different primes corresponds to an appropriate restricted covering of a sequence starting with $1$.

\begin{coro} \label{diff<-->rcov}\ \\
Let $k\in\mN$, $\pi_i\in\mP$, $\pi_i\neq\pi_j\ $ for $\ i\neq j$, $\ i,j=1,\dots,k$, and $\Pi=\prod_{i=1}^k \pi_i$.

There exist two consecutive coprime numbers $x<y$ to $\Pi$ if and only if there exists\lb a restricted covering $\{a_i\modo \pi_i\}_{i=1}^k$ of $\ap 1 {y-x-1}$.\vspace{3.5mm}

\quad$\exists\ x,y\in\mZ\ :\ (x,\Pi)=(y,\Pi)=1 \land (z,\Pi)>1\ $ for $\ x<z\in\mZ<y$

\quad\qquad$\iff$

\quad$\exists\ a_i\in\{1,\dots,\pi_i-1\}$, $i=1,\dots,k\ :\ $

\quad\qquad$\forall\ j\in\{0,\dots,y-x-2\}\ \exists\ i\in\{1,\dots,k\}:1+j\congr a_i\modo \pi_i$, and additionally

\quad\qquad$\forall\ i\in\{1,\dots,k\}:\ y-x\ncongr a_i\modo \pi_i$.
\end{coro}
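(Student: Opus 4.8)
The plan is to prove Corollary~\ref{diff<-->rcov} by combining Proposition~\ref{coprime-covering}, Corollary~\ref{a<-->0}, and Corollary~\ref{non-zero}, noting first that the statement is essentially a restatement of Proposition~\ref{coprime-covering} with two enhancements: the primes $\pi_i$ replace the first $k$ primes $p_i$ (which only matters insofar as we now require them to be \emph{distinct}, so that the Chinese Remainder Theorem applies in Corollaries~\ref{a<-->0} and~\ref{non-zero}), and the covering is made \emph{restricted} and anchored at the sequence $\ap 1 {y-x-1}$. I would set $m = y-x$, so that the sequence to be covered has length $m-1 = y-x-1$, and handle the two implications separately.

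For the forward direction ($\Rightarrow$), suppose $x<y$ are consecutive coprimes to $\Pi$. Exactly as in the proof of Proposition~\ref{coprime-covering}, every $z$ with $x<z<y$ satisfies $(z,\Pi)>1$, so some $\pi_i$ divides $z$; taking $a=x+1$ and $a_i \equiv 0 \pmod{\pi_i}$ gives a covering $\{a_i \modo \pi_i\}_{i=1}^k$ of $\ap {x+1} {m-1}$. Now I would observe that this covering is already restricted: if some $\pi_i$ covered position $a-1 = x$ then $\pi_i \mid x$, contradicting $(x,\Pi)=1$; likewise if some $\pi_i$ covered position $a+(m-1) = y$ then $\pi_i \mid y$, contradicting $(y,\Pi)=1$. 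Finally, apply Lemma~\ref{a-->b} (or the refinement in Corollary~\ref{non-zero}, whose construction preserves restrictedness since relocation by a fixed shift sends a covering of $\ap a m$ to a covering of $\ap b m$ and maps the ``extra'' positions $a-1$, $a+m$ bijectively to $b-1$, $b+m$) to relocate to a restricted covering $\{a_i \modo \pi_i\}_{i=1}^k$ of $\ap 1 {m-1}$; because the relocated covering is restricted, position $0$ is uncovered, forcing $a_i \not\equiv 0$, i.e.\ $a_i \in \{1,\dots,\pi_i-1\}$, which is the claimed form.

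For the backward direction ($\Leftarrow$), suppose $\{a_i \modo \pi_i\}_{i=1}^k$ with $a_i \in \{1,\dots,\pi_i-1\}$ covers $\ap 1 {m-1}$ and additionally $y-x = m \not\equiv a_i \pmod{\pi_i}$ for all $i$; note that $a_i \neq 0$ already says $1 \equiv 1 + 0 \not\equiv a_i$, so no $\pi_i$ covers position $0$ either, and hence this is a restricted covering of $\ap 1 {m-1}$. By Corollary~\ref{a<-->0} (using that the $\pi_i$ are distinct), there is $b \in \mZ$ with $\{0 \modo \pi_i\}_{i=1}^k$ covering $\ap b {m-1}$, and the relocation preserves restrictedness as above, so positions $b-1$ and $b+m-1$ are uncovered by $\{0 \modo \pi_i\}$. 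Translating back: for every $z$ with $b \le z \le b+m-2$ some $\pi_i \mid z$, i.e.\ $(z,\Pi)>1$, while $\pi_i \nmid (b-1)$ and $\pi_i \nmid (b+m-1)$ for all $i$, i.e.\ $(b-1,\Pi) = (b+m-1,\Pi) = 1$. Setting $x = b-1$ and $y = b+m-1 = x+m$ gives two coprimes to $\Pi$ with every integer strictly between them sharing a factor with $\Pi$, hence consecutive coprimes, and $y-x = m$ as required.

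The main obstacle is bookkeeping rather than depth: I must check carefully that each relocation lemma (\ref{a-->b}, \ref{a<-->0}, \ref{non-zero}) transports not only the covering property but also the two non-covering conditions defining restrictedness — i.e.\ that ``$a-1$ and $a+m$ are uncovered'' passes through a shift $a \mapsto b$ to ``$b-1$ and $b+m$ are uncovered'' — and that in the anchored version at $\ap 1 {m-1}$ the conditions $a_i \in \{1,\dots,\pi_i-1\}$ and $m \not\equiv a_i \pmod{\pi_i}$ are precisely the specializations of restrictedness to the positions $0$ and $m-1$ adjacent to $\ap 1 {m-1}$. The distinctness hypothesis on the $\pi_i$ must be invoked exactly where the Chinese Remainder Theorem is used (Corollaries~\ref{a<-->0}, \ref{non-zero}); everywhere else the argument is the same simple divisibility translation as in Proposition~\ref{coprime-covering}.
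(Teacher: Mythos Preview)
Your proof is correct and follows essentially the same route as the paper's: both arguments hinge on the observation that ``$x$ and $y$ are consecutive coprimes to $\Pi$'' is exactly the statement that $\{0\modo\pi_i\}_{i=1}^k$ is a restricted covering of $\ap{x+1}{y-x-1}$, and then shift by $-x$ (equivalently, set $a_i\congr -x\modo\pi_i$) to pass to a restricted covering of $\ap{1}{y-x-1}$, using the Chinese Remainder Theorem for the converse direction. The paper compresses this into two lines by writing down the shift explicitly, whereas you unpack it through Lemma~\ref{a-->b} and Corollary~\ref{a<-->0} and check the preservation of restrictedness in more detail; the content is the same.
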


\begin{proof}
Given $x,y\in\mZ$. Then $(x,\Pi)=1$, $(y,\Pi)=1$, and $(x+1+j,\Pi)>1$\lb for $j=0,\dots,y-x-2$ is equivalent to $\{0\modo \pi_i\}_{i=1}^k$ is a restricted covering\lb of $\ap {x+1} {y-x-1}$. Then $a_i\congr -x \modo \pi_i$ satisfy the requirements.
\end{proof}

Considering $\pi_i=p_i$, we can conclude that $ m\in D(k)$ is equivalent to the existence of a restricted covering $\{a_i\modo p_i\}_{i=1}^k$ of $\ap 1 {m-1}$. This is the basic idea of the following inferences and of our computations described in the next section.

We now prove the existence of a covering for any $k$ and the propagation of a\lb covering to greater primorials.\\

\begin{prop} {\itshape Existence of coverings.} \label{exist-cov}\\
For every $k\in\mN$, there exists a restricted covering $\{a_i\modo p_i\}_{i=1}^k$ of $\ap 1 {2\,\mdot\,k-1}$, i.e.
$$\forall k\in\mN\ :\ 2\mdot k\in D(k).$$
\end{prop}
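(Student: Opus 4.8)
The plan is to exhibit, for each $k\in\mN$, an explicit restricted covering of $\ap 1 {2\mdot k-1}$ by residue classes modulo the first $k$ primes, thereby showing $2\mdot k\in D(k)$ via corollary \ref{diff<-->rcov}. The natural idea is to cover the positions $1,2,\dots,2\mdot k-1$ by having the prime $p_i$ take care of position $i$ (and, by periodicity modulo $p_i$, also position $i+p_i$ whenever that falls in range). Concretely, for $i=1,\dots,k$ I would set $a_i\congr i\modo p_i$. Since $1\le i\le k<p_i$ for $i\ge1$ (indeed $p_i\ge i+1$), each $a_i$ lies in $\{1,\dots,p_i-1\}$ as required, and in particular none of the residue classes is $0$, which is the right shape for a restricted covering of a sequence starting at $1$.

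The first key step is to check that this family actually covers every position $j\in\{1,\dots,2\mdot k-1\}$. Split into two cases: if $1\le j\le k$, then position $j$ is covered by $a_j\congr j\modo p_j$. If $k< j\le 2\mdot k-1$, write $j=i+p_i$ for a suitable $i$; since $p_1=2$ handles the even positions in the upper half and one checks that consecutive "shifted" classes $i+p_i$ leave no gaps, the covering is complete. I expect the clean way to phrase this is: position $j$ in the upper range is congruent to $i\modo p_i$ for some $i\le k$ precisely because $j-i$ is then a positive multiple of $p_i$ at most $p_i$ itself — so it suffices to argue that the intervals $\{i+1,\dots,i+p_i\}$ for $i=1,\dots,k$ together contain $\{k+1,\dots,2\mdot k-1\}$, which follows from $p_i\ge 2$ for all $i$ together with the placement of $p_1=2$.

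The second key step is verifying the restriction condition from definition \ref{restricted}: that no residue class covers position $0$ or position $2\mdot k$. Since $a_i\congr i\modo p_i$ with $1\le i<p_i$, we have $a_i\ncongr 0\modo p_i$, so position $0$ is uncovered. For position $2\mdot k$: I need $2\mdot k\ncongr i\modo p_i$ for all $i\le k$, i.e. $p_i\ndiv(2\mdot k-i)$. For $i=k$ this is $p_k\ndiv k$, clear since $0<k<p_k$. For $i<k$ one must check $2\mdot k-i$ is not a multiple of $p_i$; here a little care is needed because $2\mdot k-i$ can be as large as $2\mdot k-1$, so one may need to adjust the construction slightly (for instance, shifting which prime covers which position, or invoking the left-most/uncovered-neighbour reduction of corollary \ref{non-zero}) to guarantee the right endpoint stays uncovered.

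The main obstacle is precisely this endpoint bookkeeping: the naive assignment $a_i\congr i\modo p_i$ covers the left half cleanly and the restriction at $0$ for free, but ensuring simultaneously that the full range $1,\dots,2\mdot k-1$ is covered \emph{and} that $2\mdot k$ escapes every class requires choosing the covering so that the "second copies" $i+p_i$ of the small primes fill exactly the upper half without overshooting to $2\mdot k$. I anticipate handling this either by an explicit case analysis for small $k$ plus an inductive step propagating a covering from $k-1$ to $k$ (adding the class $a_k\congr k\modo p_k$, using $p_k>2\mdot(k-1)$ is false in general so instead using $p_k\ge 2k-1$ which fails too — hence the induction must add more than one new position), or, more robustly, by combining corollary \ref{non-zero} with a direct construction and then reading off $N_{min}(k)\le 2\mdot k$ as a by-product. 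The cleanest route is likely a direct explicit covering with a careful but elementary verification of the two endpoint non-congruences.
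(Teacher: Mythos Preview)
Your proposal has a genuine gap: the explicit choice $a_i\congr i\modu{p_i}$ does \emph{not} cover $\ap 1 {2k-1}$. Already at $k=3$ it fails: with $a_1\congr 1\modu 2$, $a_2\congr 2\modu 3$, $a_3\congr 3\modu 5$, the position $4$ is uncovered, since $4\ncongr 1\modu 2$, $4\ncongr 2\modu 3$, and $4\ncongr 3\modu 5$. The ``second copies'' $i+p_i$ do not fill the upper half $\{k+1,\dots,2k-1\}$ because the primes grow too quickly; for $i\ge 3$ one has $i+p_i>2k-1$ in most cases, so only $p_1$ and $p_2$ contribute second hits, and these are insufficient. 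You yourself notice the obstacle but none of the suggested patches (induction adding one prime, invoking corollary \ref{non-zero}) closes it: the covering is simply too sparse.

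The paper's proof rests on a different organising idea that you are missing. One takes $a_1\congr 1\modu 2$, which alone covers all $k$ odd positions $1,3,\dots,2k-1$. This leaves exactly the $k-1$ even positions $2,4,\dots,2k-2$ to be covered by the $k-1$ odd primes $p_2,\dots,p_k$, one position per prime. The assignment is built inductively: at each stage one has a single leftover even position $2y$ with $y\le x$ and the new even position $2(x{+}1)$; since $p_{x+1}>x+1>y$, these two are in distinct nonzero residue classes $\modu{p_{x+1}}$, so at most one of them is $\congr 2k\modu{p_{x+1}}$, and one picks the other as $a_{x+1}$. This simultaneously secures the covering and the restriction at both endpoints. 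The crucial economy is that after $p_1=2$ handles all odds, each remaining prime needs to cover only a \emph{single} position, which is what creates enough freedom to dodge the endpoint $2k$.
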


\begin{proof}
\ We set $a_1\congr 1\modo 2$. Then $a_1\ncongr0\modo2$ and $a_1\ncongr2\mdot k\modo2$ whereas\lb $a_1\congr2\mdot j+1\modo2$ for $j\in\{0,\dots,k-1\}$.

The case $k=1$ was proven thereby. Let now be $k>1$. The remaining positions\lb of  $\ap 1 {2\,\mdot\,k-1}$ to be covered are $2\mdot j\ncongr a_1\modo2$, $j=1,\dots,k-1$.

We will successively select suitable residue classes $a_i\modo p_i$, $i=2,\dots,k$ and define\lb a bijection $f:\{a_i\modo p_i\}_{i=1}^k \mapsto \{2\mdot j\}_{j=0}^{k-1}$ between the covering and the remaining\lb sequence positions and zero where $f(a_1\modo 2)=0$. This ensures the covering\lb without taking care that a residue class $a_i\modo p_i$, $1<i\le k$ might cover more than one position.\\

Given $1\le x\in\mN<k$, $\ a_i\modo p_i$ were properly selected for $i=1,\dots,x$, and $f(a_i\modo p_i)\le2\mdot x$. All of these assumptions are satisfied for $x=1$.

The $x$ residue classes were mapped to $0$, and to $x-1$ elements of $\{2\mdot j\}_{j=1}^{x}$ if $x>1$. Thus, one position $2\mdot y$, $1\le y\le x$ is left where $y\ncongr 0$, $x+1\ncongr 0$, and\lb $y\ncongr x+1\ \modo p_{x+1}$ because $p_{x+1}>x+1>y$. $y$ and $x+1$ cannot belong to the same residue class $\modo p_{k+1}$. So, we can select $a_{x+1}\modo p_{x+1}$ such that $a_{x+1}\ncongr0\modo p_{x+1}$ and $a_{x+1}\ncongr2\mdot k\modo p_{x+1}$ because $p_{x+1}>3$. Either $a_{x+1}\congr y \text{ or } a_{x+1}\congr x+1\ \modo p_{x+1}$ meets the requirements. We assign $a_{x+1}\modo p_{x+1}$ to $2\mdot y$ or $2\mdot x+2$, respectively.

In case of $x=k-1$, one position $2\mdot y$, $1\le y\le k-1$ is left again with $y\ncongr 0$, and $y\ncongr k\ \modo p_k$ because $p_k>k$. We select $a_k\congr y\modo p_k$ and assign it to $2\mdot y$.

Hence, $\{a_i\modo p_i\}_{i=1}^{k}$ is a restricted covering of $\ap 1 {2\,\mdot\,k-1}$, and $2\mdot k\!\in\!\!D(k)$ by\lb corollary \ref{diff<-->rcov}.
\end{proof}
\ 

So we know that every even natural number occurs as the difference between\lb two consecutive coprimes to at least one primorial. Below, we demonstrate that\lb every difference between two consecutive coprimes to a primorial is also the difference between two consecutive coprimes to a greater primorial. An analogous statement has already been proven by de Polignac\cite{de_Polignac_1849_II}.

\begin{prop} {\itshape Propagation of coverings.} \label{k-->k+1}\\
Let $m,k\in\mN$. Then
$$m\in D(k)\ \Longrightarrow m\in D(k+1).$$
\end{prop}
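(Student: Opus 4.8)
The plan is to translate the statement into the language of restricted coverings via Corollary~\ref{diff<-->rcov} and then exhibit an explicit covering for $p_{k+1}\#$ built from a given covering for $p_k\#$. So suppose $m\in D(k)$. By Corollary~\ref{diff<-->rcov} (with $\pi_i=p_i$) there is a restricted covering $\{a_i\modo p_i\}_{i=1}^k$ of $\ap 1 {m-1}$, meaning every position $1,\dots,m-1$ is hit by some $a_i\modo p_i$, while neither position $0$ nor position $m$ is hit by any of them. The goal is to produce a restricted covering $\{b_i\modo p_i\}_{i=1}^{k+1}$ of $\ap 1 {m-1}$; then Corollary~\ref{diff<-->rcov} gives $m\in D(k+1)$.

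The natural attempt is to keep the first $k$ residue classes unchanged, $b_i\congr a_i\modo p_i$ for $i=1,\dots,k$, and to choose the extra class $b_{k+1}\modo p_{k+1}$ so that it covers none of the positions $0,1,\dots,m$. Since $\{a_i\modo p_i\}_{i=1}^k$ already covers all of $1,\dots,m-1$, adding any further class cannot destroy the covering property; the only thing to check is that the new class keeps the covering \emph{restricted}, i.e.\ that $b_{k+1}\ncongr 0\modo p_{k+1}$ and $b_{k+1}\ncongr m\modo p_{k+1}$, and also that adding it does not cause one of the old classes to be used on $0$ or $m$ (it does not, since the old classes are untouched). So it suffices to pick a residue $b_{k+1}\in\{0,\dots,p_{k+1}-1\}$ avoiding the two forbidden residues $0\bmod p_{k+1}$ and $m\bmod p_{k+1}$. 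This is possible precisely when $p_{k+1}\ge 3$, which holds for all $k\in\mN$ since $p_{k+1}\ge p_2=3$; at most two residues are forbidden and $p_{k+1}$ has at least three residues. (One should also note the degenerate base case: if $m-1=0$, i.e.\ $m=2$, the covering of the empty sequence $\ap 1 0$ trivially remains restricted as long as positions $0$ and $2$ stay uncovered, and the same choice of $b_{k+1}$ works; alternatively invoke Lemma~\ref{N-min} directly.)

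I would write the proof in this order: first invoke Corollary~\ref{diff<-->rcov} to get the restricted covering $\{a_i\modo p_i\}_{i=1}^k$ of $\ap 1 {m-1}$; second, set $b_i\congr a_i\modo p_i$ for $i\le k$ and observe the covering condition on positions $1,\dots,m-1$ is inherited; third, choose $b_{k+1}\modo p_{k+1}$ avoiding $0$ and $m\bmod p_{k+1}$, which is possible because $p_{k+1}\ge 3$; fourth, verify the restrictedness conditions of Definition~\ref{restricted} for the enlarged family — positions $0$ and $m$ are avoided by $b_1,\dots,b_k$ by hypothesis and by $b_{k+1}$ by construction; finally, apply Corollary~\ref{diff<-->rcov} in the reverse direction to conclude $m\in D(k+1)$.

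The argument is essentially a one-line padding construction, so there is no serious obstacle; the only subtlety is the bookkeeping of the restrictedness condition, namely making sure that the freshly added residue class $b_{k+1}\modo p_{k+1}$ simultaneously avoids both endpoints $0$ and $m$, and confirming that the prime $p_{k+1}$ is large enough ($\ge 3$) to leave such a residue available. I would emphasize that it is the restricted formulation from Corollary~\ref{diff<-->rcov}, rather than the plain covering of Proposition~\ref{coprime-covering}, that makes this immediate, since a plain covering of $\ap a {m-1}$ would not by itself pin down the difference to be exactly $m$.
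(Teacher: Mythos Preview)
Your proof is correct and follows essentially the same line as the paper: invoke Corollary~\ref{diff<-->rcov} to obtain a restricted covering $\{a_i\modo p_i\}_{i=1}^k$ of $\ap 1 {m-1}$, then append a residue $a_{k+1}\in\{1,\dots,p_{k+1}-1\}$ with $a_{k+1}\ncongr m\modo p_{k+1}$, which is possible since $p_{k+1}\ge 3$, and apply the corollary again. Your write-up is somewhat more explicit about the endpoint bookkeeping and the degenerate case $m=2$, but the argument is the same padding construction.
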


\begin{proof}
According to corollary \ref{diff<-->rcov}, $m\in D(k)$ if and only if there exists a restricted covering $\{a_i\modo p_i\}_{i=1}^k$ of $\ap 1 {m-1}$, i.e.\ 

$\exists\ a_i\in\{1,\dots,p_i-1\},\ i=1,\dots,k\ :\ $

\qquad$\forall\ j\in\{0,\dots,m-2\}\ \exists\ i\in\{1,\dots,k\}:1+j\congr a_i\modo p_i$, and additionally

\qquad$\forall\ i\in\{1,\dots,k\}:\ m\ncongr a_i\modo p_i$.\\
We can select $a_{k+1}\in\{1,\dots,p_{k+1}-1\}$ such that $a_{k+1}\ncongr m\modo p_{k+1}$ because $p_{k+1}\ge3$. Then $\{a_i\modo p_i\}_{i=1}^{k+1}$ is also a restricted covering of $\ap 1 {m-1}$ and $m\in D(k+1)$.
\end{proof}
\ 

Propositions \ref{exist-cov} and \ref{k-->k+1} together state that the first $k$ even numbers are differences of consecutive coprimes to $p_k\#$. Thus, $2\mdot k$ is a lower bound on $N_{min}(k)$.

\begin{coro} {\itshape Unboundedness of $N_{min}$.} \label{lower-bound}\\
Let $k\in\mN$. Then
$$2\mdot k\le N_{min}(k).$$
\end{coro}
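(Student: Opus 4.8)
The plan is to combine the two immediately preceding propositions. By Proposition~\ref{exist-cov}, we have $2\mdot k\in D(k)$ for every $k\in\mN$; this gives the base case. The goal is then to show that $2\mdot k\in D(k')$ for all $k'\ge k$, after which the definition of $N_{min}$ does the rest.

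First I would fix $k\in\mN$ and apply Proposition~\ref{exist-cov} to obtain $2\mdot k\in D(k)$. Next I would argue by induction on $k'$ using Proposition~\ref{k-->k+1}: if $2\mdot k\in D(k')$ for some $k'\ge k$, then $2\mdot k\in D(k'+1)$. Hence $2\mdot k\in D(k')$ for every $k'\ge k$. In particular, every even number $2\mdot n$ with $1\le n\le k$ arises as an element of $D(k)$: indeed $2\mdot n\in D(n)$ by Proposition~\ref{exist-cov}, and since $n\le k$, repeated application of Proposition~\ref{k-->k+1} yields $2\mdot n\in D(k)$.

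Finally I would invoke Lemma~\ref{N-min}, which guarantees an even $N_{min}(k)$ with the property that $2\mdot n\in D(k)$ for all $n\le N_{min}(k)/2$; by that lemma's maximality (it is the largest such threshold, being tied to which initial even numbers lie in $D(k)$), the fact that $2\mdot n\in D(k)$ for all $n\le k$ forces $N_{min}(k)\ge 2\mdot k$. This completes the argument.

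Honestly, there is no real obstacle here: the corollary is a bookkeeping consequence of Propositions~\ref{exist-cov} and \ref{k-->k+1}. The only point requiring a moment of care is that $N_{min}(k)$ as introduced in Lemma~\ref{N-min} should be read as the \emph{largest} even number below which all even values occur as differences, so that exhibiting $2\mdot 1,2\mdot 2,\dots,2\mdot k$ in $D(k)$ directly bounds it from below by $2\mdot k$; I would state this reading explicitly to keep the deduction clean.
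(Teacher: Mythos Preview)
Your proposal is correct and follows essentially the same approach as the paper: both combine Proposition~\ref{exist-cov} (to get $2n\in D(n)$) with Proposition~\ref{k-->k+1} (to propagate membership up to $D(k)$). The paper organises this as an induction on $k$ for the inequality $2k\le N_{min}(k)$ itself, whereas you fix $k$ and show directly that $2,4,\dots,2k\in D(k)$; this is only a cosmetic difference, and your explicit remark that $N_{min}(k)$ must be read as the largest initial threshold is a point the paper leaves implicit.
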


\begin{proof}
Proof by induction.  $2\le N_{min}(1)$ because $2$ is the single element of $D(1)$.

Given $2\mdot j\le N_{min}(j)$. Then $2\mdot j\le N_{min}(j+1)$ by proposition\ref{k-->k+1} and\lb $2\mdot {(j+1)}\in D(j+1)$ by proposition\ref{exist-cov}. 
\end{proof}
\

Summarising the results so far, we attain

\qquad$2\mdot k\le N_{min}(k)\le N_{max}(k)=h(k)$, \qquad and

\qquad$2\mdot p_{k-1}\in D(k)$ for all $k>1$.\\
In the next chapter, we will determine even numbers $N_{min}(k)<2\mdot n<N_{max}(k)$, that cannot be represented as a difference between consecutive coprimes to $p_k\#$. 

\pagebreak

\section{Computation}

Corollary\ref{diff<-->rcov} demonstrates that every difference $m$ between two consecutive\lb coprimes to $p_k\#$ corresponds to a restricted covering of $\ap 1 {m-1}$. We aim to develop a computational approach to the search for such restricted coverings.\\

The prime number 2 plays a specific role in many contexts. With respect to\lb coverings, we formulate a general lemma which makes the separation of the only\lb even prime consistent. Extending a set of odd primes by 2 can more than double the length of the coverable sequence. Analogous results have been described by various authors \cite{Hagedorn_2009,Hajdu_Saradha_2012,Ziller_Morack_2016,Ziller_2019}. 

\begin{lemm} \label{two}\ \\
Let $m,k\in\mN$, $k\ge2$, $\pi_1=2$, $\pi_i\in\mP>2$, and $\pi_i\neq\pi_j$ for $i\neq j$, $\ i,j=2,\dots,k$. Furthermore, let $b_1\in\{0,1\}$, and $\ a_i,b_i\in\{0,\dots,\pi_i-1\}$ for $i=2,\dots,k$.
$$ \exists\ a\in\mZ: \{a_i\modo\pi_i\}_{i=2}^k \text{ \,covers } \langle a\rangle_{m}\ \iff\ \exists\ b\in\mZ: \{b_i\modo\pi_i\}_{i=1}^k \text{ \,covers } \langle b\rangle_{2m+1}. $$
\end{lemm}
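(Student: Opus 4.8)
The plan is to prove both implications by transferring coverings between the sequence $\langle a\rangle_m$ (covered by odd primes only) and the doubled-and-shifted sequence $\langle b\rangle_{2m+1}$ (covered by the same odd primes together with $\pi_1 = 2$). The key observation is that a sequence of $2m+1$ consecutive integers contains exactly $m$ or $m+1$ integers of each parity; if we use the residue class $b_1 \bmod 2$ to cover all integers of one parity in $\langle b\rangle_{2m+1}$, the uncovered positions are the $m$ integers of the opposite parity, which — after dividing by $2$ and translating — form a sequence of $m$ consecutive integers that must be covered by the odd primes $\pi_2,\dots,\pi_k$. So the correspondence is essentially the map $b + 2t \leftrightarrow a + t$ on positions, and the main work is bookkeeping with the Chinese Remainder Theorem to produce the residue classes explicitly.

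For the direction ``$\Rightarrow$'', suppose $\{a_i \bmod \pi_i\}_{i=2}^k$ covers $\langle a\rangle_m$, i.e.\ for every $t \in \{0,\dots,m-1\}$ there is some $i$ with $a+t \congr a_i \modo \pi_i$. I would pick $b \in \mZ$ by the Chinese Remainder Theorem so that $b \congr 1 \modo 2$ (this fixes $b_1 = 1$; the case $b_1 = 0$ is symmetric, shifting $b$ by $1$) and $b \congr 2a_i - a \cdot(\text{something})\modo \pi_i$ — more precisely, I want the position $b + 2t$ in $\langle b\rangle_{2m+1}$ to play the role of $a+t$, so I need $b + 2t \congr b_i \modo \pi_i$ to hold whenever $a + t \congr a_i \modo \pi_i$. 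Since $\pi_i$ is odd, $2$ is invertible mod $\pi_i$, so I set $b_i \congr 2a_i + (b - 2a) \modo \pi_i$; then $b + 2t \congr b_i \modo \pi_i \iff 2t \congr 2a_i - 2a \iff t \congr a_i - a \iff a+t \congr a_i \modo \pi_i$. The odd positions of $\langle b\rangle_{2m+1}$ are exactly $b, b+2, \dots, b + 2(m-1)$ (that is $m$ positions, since $b$ is odd and the length is $2m+1$), and each is covered by $b_1 = 1 \bmod 2$; the even positions $b+1, b+3, \dots, b+2m-1$ are the $m$ values $b + 2t$, $t=1,\dots,m$... — here I need to recheck the index range, because $\langle b \rangle_{2m+1}$ has positions $0,\dots,2m$, so with $b$ odd the even-position entries are $b+1, b+3, \dots, b+(2m-1)$, exactly $m$ of them, corresponding to $t = 0,\dots,m-1$ after reindexing $b' = b+1$; then each is covered by some $b_i$ via the equivalence above. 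Thus $\{b_i \bmod \pi_i\}_{i=1}^k$ covers $\langle b\rangle_{2m+1}$.

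For the converse ``$\Leftarrow$'', suppose $\{b_i \bmod \pi_i\}_{i=1}^k$ covers $\langle b\rangle_{2m+1}$ with $b_1 \in \{0,1\}$. The residue class $b_1 \bmod 2$ covers only the $m$ or $m+1$ integers in $\langle b\rangle_{2m+1}$ of one fixed parity; hence \emph{all} integers of the other parity — of which there are at least $m$ — must be covered by the odd primes $\pi_2,\dots,\pi_k$. Among those there is a run of exactly $m$ consecutive same-parity integers, say $c, c+2, \dots, c + 2(m-1)$, each covered by some $\pi_i$ with $i \ge 2$. Writing $a = c/2$ rounded appropriately — more carefully, choose $a \in \mZ$ with $2a \congr c - b_j$-adjustments — I define $a_i \bmod \pi_i$ by inverting $2$ mod $\pi_i$ again: I want $a + t \congr a_i \iff c + 2t \congr b_i$, so set $a_i \congr a + (b_i - c)\cdot 2^{-1} \modo \pi_i$. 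Then $\{a_i \bmod \pi_i\}_{i=2}^k$ covers $\langle a\rangle_m$. The main obstacle — really the only delicate point — is getting the parity/index arithmetic exactly right: identifying which parity class $b_1$ handles, confirming that the complementary parity class always contains a block of precisely $m$ consecutive (in the arithmetic-progression-with-difference-$2$ sense) integers inside a window of length $2m+1$, and making sure the off-by-one in the endpoints of $\langle b \rangle_{2m+1}$ does not spoil the ``$2m+1$, not $2m$'' count. Everything else is a routine application of the Chinese Remainder Theorem and invertibility of $2$ modulo odd primes, exactly as in Lemma~\ref{a-->b} and Corollary~\ref{a<-->0}.
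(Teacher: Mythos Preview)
Your approach is the same as the paper's: split $\langle b\rangle_{2m+1}$ by parity, let $\pi_1=2$ handle one class, and transfer the other class to or from a covering of $\langle a\rangle_m$ via the invertibility of $2$ modulo each odd $\pi_i$. The converse direction is fine and in fact slightly more explicit than the paper's, which tacitly treats only the case $b_1\equiv b\pmod 2$ when it passes from ``appropriate $i$'' to ``appropriate $i\ge 2$''.

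In the forward direction you have an indexing slip that you half-notice but never actually repair. With $b$ odd and $b_1=1$, the odd entries of $\langle b\rangle_{2m+1}$ are $b,b+2,\dots,b+2m$ --- that is $m+1$ of them, not $m$ --- and these are the ones handled by $b_1$. The positions needing odd-prime coverage are therefore the even ones $b+1,b+3,\dots,b+2m-1$, i.e.\ $b+(2t+1)$ for $t=0,\dots,m-1$. But your choice $b_i\equiv 2a_i+(b-2a)$ was engineered so that $b+2t\equiv b_i\iff a+t\equiv a_i$, which targets the wrong parity class; the ``reindexing $b'=b+1$'' you mention does not help, because the $b_i$ were already defined relative to $b$, not $b'$. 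The fix is either to set $b_i\equiv 2a_i+(b+1-2a)\pmod{\pi_i}$, or --- as the paper does --- to take $b$ even and $b_1=0$, so that the $m+1$ entries $b+2x$ are covered by $b_1$ and the $m$ entries $b+2x+1$ satisfy $b+2x+1\equiv 2(a+x)\equiv 2a_i$ under the clean choice $b\equiv 2a-1$ and $b_i\equiv 2a_i\pmod{\pi_i}$.
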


\begin{proof}\ \\
($\Rightarrow$)
Given $\forall\ x\in\{0,\dots,m-1\}\ \exists\ i\in\{2,\dots,k\}:a_i\equiv a+x\modu {\pi_i}$.

There exists $b\in\mZ$ with $b\equiv 2\mdot a-1\ \modu {\pi_i}$, $i=2,\dots,k$, and $b\equiv 0\ \modu 2$ due to the Chinese remainder theorem. We set $b_1=0$ and $b_i=2\mdot a_i\modo {\pi_i}$, $i=2,\dots,k$. Then\vspace{3.5mm}

\qquad $b+2\mdot x\equiv 0+2\mdot x\equiv 0\equiv b_1\ \modu 2$\qquad for $x=0,\dots,m$,\qquad and

\qquad $b+2\mdot x+1\equiv  2\mdot a-1+2\mdot x+1\equiv 2\mdot (a+x)\equiv 2\mdot a_i\equiv b_i\modu {\pi_i}$

\hspace{75mm} for $x=0,\dots,m-1$ and appropriate $i\ge2$.
\ \\\\
($\Leftarrow$)
Given $\forall\ y\in\{0,\dots,2\mdot m\}\ \exists\ i\in\{1,\dots,k\}:b_i\equiv b+y\modu {\pi_i}$.\\
Then $\forall\ x\in\{0,\dots,m-1\}\ \exists\ i\in\{1,\dots,k\}:b_i\equiv b+2\mdot x+1\modu {\pi_i}$.

There exist $a\in\mZ$ with $2\mdot a\equiv b+1$ and  $a_i\in\{0,\dots,\pi_i-1\}$ with $2\mdot a_i\equiv b_i\ \modu {\pi_i}$, $i=2,\dots,k$. For $x=0,\dots,m-1$, we get 
$$2\mdot(a+x)\equiv 2\mdot a+2\mdot x\equiv b+2\mdot x+1\equiv  b_i\equiv 2\mdot a_i\ \modu {\pi_i},$$
and thus $a+x\equiv a_i\ \modu {\pi_i}$ for appropriate $i\ge2$ because $2\ndiv\pi_i$.
\end{proof}
\

In continuation of  proposition\ref{coprime-covering} and corollary\ref{diff<-->rcov},
members of $D(k)$ can now be characterised by restricted coverings using only odd primes.

\begin{coro} \label{coprime-covering-2}\ \\
Let $m,k\in\mN$, $k\ge2$. Then\vspace{3.5mm}

\qquad $m\in D(k)\ \iff\ \exists\ a_i\in\{1,\dots,p_i-1\},\ i=2,\dots,k :$

\hspace{41mm}$\{a_i\modo p_i\}_{i=2}^k$ is a restricted covering of $ \ap 1 {m/2-1}$.
\end{coro}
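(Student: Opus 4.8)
The plan is to combine Corollary~\ref{diff<-->rcov} (which recasts $m\in D(k)$ as the existence of a restricted covering of $\ap 1 {m-1}$ by residue classes modulo $p_1,\dots,p_k$) with the halving mechanism already used for Lemma~\ref{two}, being careful to transport the restrictedness condition along the correspondence. Since every element of $D(k)$ is even by Lemma~\ref{N-min} and the right-hand side is only meaningful for even $m$, I would at the start write $m=2\mdot m'$ with $m'\in\mN$; for $m'=1$ the target sequence $\ap 1 0$ is empty and both sides reduce to the trivially true $2\in D(k)$, so I may assume $m'\ge 2$.

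For the direction ``$m\in D(k)\Rightarrow$ restricted covering by odd primes'', Corollary~\ref{diff<-->rcov} with $\pi_i=p_i$ gives a restricted covering $\{a_i\modo p_i\}_{i=1}^k$ of $\ap 1 {m-1}$, so $a_i\ncongr 0$ and $a_i\ncongr m\modo p_i$ for every $i$. I would first note that $a_1\congr 1\modo 2$: otherwise $a_1\congr 0\modo 2$ would cover position $0$, contradicting restrictedness. Hence the odd primes $\{a_i\modo p_i\}_{i=2}^k$ must cover every even position $2,4,\dots,m-2$ of $\ap 1 {m-1}$, i.e.\ the values $2\mdot t$ for $t=1,\dots,m'-1$. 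Because each $p_i$ with $i\ge2$ is odd, $2$ is invertible modulo $p_i$, so I set $b_i\congr 2^{-1}a_i\modo p_i$; then $2\mdot t\congr a_i\iff t\congr b_i\modo p_i$, so $\{b_i\modo p_i\}_{i=2}^k$ covers $\ap 1 {m'-1}$, and multiplying the forbidden congruences $a_i\ncongr 0$ and $a_i\ncongr m=2\mdot m'$ by $2^{-1}$ yields $b_i\ncongr 0$ and $b_i\ncongr m'\modo p_i$, i.e.\ the covering is restricted and $b_i\in\{1,\dots,p_i-1\}$.

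For the converse I would reverse this substitution: given a restricted covering $\{a_i\modo p_i\}_{i=2}^k$ of $\ap 1 {m'-1}$, put $b_1\congr 1\modo 2$ and $b_i\congr 2\mdot a_i\modo p_i$ for $i\ge2$. The odd positions of $\ap 1 {m-1}$ are covered by $b_1\modo 2$, and an even position $2\mdot t$ with $1\le t\le m'-1$ is covered since $2\mdot t\congr 2\mdot a_i\iff t\congr a_i\modo p_i$ for a suitable $i\ge2$; the boundary positions $0$ and $m$ are even, hence not in the class $1\modo 2$, while $0\ncongr 2\mdot a_i$ and $m=2\mdot m'\ncongr 2\mdot a_i\modo p_i$ follow from $a_i\ncongr 0$ and $a_i\ncongr m'\modo p_i$. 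Thus $\{b_i\modo p_i\}_{i=1}^k$ is a restricted covering of $\ap 1 {m-1}$, and Corollary~\ref{diff<-->rcov} gives $m\in D(k)$.

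The only genuine obstacle is the bookkeeping around restrictedness: one must check that the single even prime $p_1=2$ exactly accounts for the two boundary positions $0$ and $m$ of the long sequence, and that the two forbidden end-residues of $\ap 1 {m-1}$ correspond under multiplication by $2$ to the two forbidden end-residues of $\ap 1 {m'-1}$; everything else is the routine fact that multiplication by the invertible element $2$ modulo an odd prime is a bijection on residue classes carrying coverings to coverings. Alternatively, one could observe that the correspondence in Lemma~\ref{two} visibly preserves restrictedness (its map $a_i\mapsto 2\mdot a_i$ and the translation it uses do), and then relocate the resulting covering to start at $1$ via Lemma~\ref{a-->b} before invoking Corollary~\ref{diff<-->rcov} again.
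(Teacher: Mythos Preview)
Your proof is correct and follows essentially the same route as the paper: invoke Corollary~\ref{diff<-->rcov} to pass to a restricted covering of $\ap 1 {m-1}$, then use the halving correspondence of Lemma~\ref{two} (multiplication by $2^{-1}$ modulo the odd primes) to reduce to $\ap 1 {m/2-1}$. The paper compresses this into a single sentence citing Lemma~\ref{two} and Corollary~\ref{non-zero}, whereas you spell out explicitly why restrictedness survives the correspondence---a point the paper's citations leave implicit, since Lemma~\ref{two} as stated concerns ordinary coverings only.
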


\begin{proof}
Because of corollary\ref{diff<-->rcov}, $m\in D(k)$ if and only if there exists a restricted covering $\{b_i\modo p_i\}_{i=1}^k$ of $\ap 1 {m-1}$. This is equivalent to the existence of a restricted covering $\{a_i\modo p_i\}_{i=2}^k$ of $ \ap 1 {m/2-1}$ by  lemma\ref{two} and corollary \ref{non-zero}.
 \end{proof}
\

Consequently, we can reduce the computational effort by only considering\lb odd primes.

\subsection*{Algorithm}

The Greedy Permutation Algorithm (GPA) \cite{Ziller_Morack_2016} has proved to be efficient to compute Jacobsthal's function and related sequences. It was developed for the search of general coverings and considers only odd primes as well. 

 We extended and adapted this algorithm for the determination of restricted\lb coverings which represent the elements of the sets $D(k)$. The exclusion of one\lb additional residue class per prime was incorporated for this purpose.\\

The underlying idea of GPA is based on a specific order of choosing appropriate residue classes for each prime under consideration. GPA chooses that residue first which covers most of the free positions. It need not be reconsidered on the same recursion level again because all possible combinations are checked with its first use. GPA carries out a comprehensive search across all relevant residue classes. However, the greedy principle makes it possible to discard ineligible permutations of residue classes much earlier than by processing them in successive order.

GPA is a recursive algorithm. It starts with a given set of primes and an empty\lb array representing the sequence $\langle 1\rangle_m$ of a tentative length $m$. The algorithm tries\lb to find suitable residues $a_i\modo p_i$ such that as many positions of the sequence as\lb possible can be covered. In each step, one of the remaining $a_i\modo p_i$ and therefore also the prime $p_i$ itself are chosen, the corresponding array elements are assigned, and the number of still free positions is compared with the maximum number of positions which can be covered by the remaining primes. If there is no more chance to fill the sequence, then the current step will be skipped. A detailed description of the Greedy Permutation Algorithm can be found in \cite{Ziller_Morack_2016}.\\

Some simple generalisations of GPA make the efficient computation of restricted coverings possible. The tentative sequence length $m$ remains fixed. All residue classes $a_i\congr  m+1\modu p_i,\ i=2,\dots,k$ are excluded from processing.
The following\lb pseudocode \ref{AGPA} summarises the algorithm applied for our computations.

\pagebreak

\begin{algorithm}[!h]
\caption{\ Adapted Greedy Permutation Algorithm (AGPA).} \label{AGPA}
\begin{algorithmic} 
\Procedure{adapted\_greedy\_permutation}{arr,i,ftab}
   \If {restricted\_covering\_found}  break
   \EndIf
   \If {i<k-1}
      \If {i=1}
           \State fill\_frequency\_table\_of\_remainders(ftab)
           \State n\_empty=m	\Comment Starting number of free array positions
           \State n\_possible=count\_max\_possible\_covered\_positions(ftab)\\
			 				\Comment Starting number of maximum coverable positions
       \Else
         \State n\_empty=update\_free\_array\_positions(arr)
         \State n\_possible=update\_max\_possible\_covered\_positions(ftab)
      \EndIf

       \If {n\_possible$\ge$n\_empty}
          \State select\_appropriate\_$a_i$\_and\_$p_i$(ftab)
          \State arr1=arr; \ fill\_array(arr1,$a_i$,$p_i$)
          \State  ftab1=ftab; \ update\_frequency\_table\_of\_remainders(ftab1)
          \State adapted\_greedy\_permutation(arr1,i+1,ftab1)	\Comment Permutation level i+1
          \State delete\_frequency\_of\_$a_i$\_mod\_$p_i$(ftab)
          \State adapted\_greedy\_permutation(arr,i,ftab)	\Comment Permutation level i
      \EndIf
   \Else
      \State count\_array(arr)
      \If {sequence\_is\_covered}
          \State record\_covering
          \State restricted\_covering\_found=true
          \State break
      \EndIf
   \EndIf
\EndProcedure

\vspace{1.75mm} \hrule \vspace{1.75mm}

\State m=sequence\_length	\Comment Sequence length
\State arr=empty\_array		\Comment Sequence array
\State plist=[$p_2,\dots,p_k$]	\Comment Array of primes
\State i=1	\Comment Starting prime array index
\State ftab=empty\_table	\Comment Frequency table of remainders
\State restricted\_covering\_found=false	\Comment No covering found yet
\State adapted\_greedy\_permutation(arr,i,ftab)	\Comment Recursion
\State \Return restricted\_covering\_found	\Comment Result
\end{algorithmic}
\end{algorithm}

\clearpage


\begin{table}[!h]
  \centering
  \renewcommand{\arraystretch}{0.975}
  \setlength{\tabcolsep}{4mm}
\begin{tabular}{!\vl r|r|r|r|l|r !\vl}
  \noalign{\hrule height 2pt}
\rule{0pt}{14pt}$k$&$p_k$&$h(k-1)$&$N_{min}(k)$&\clg non-existent differences&$h(k)$\\[2pt]
  \noalign{\hrule height 2pt}
\rule{0pt}{14pt}1&2&-&2&  -&2\\
2&3&2&4&  -&4\\
3&5&4&6&  -&6\\
4&7&6&10&  -&10\\
5&11&10&14&  -&14\\
6&13&14&18&\clg 20&22\\
7&17&22&26&  -&26\\
8&19&26&30&\clg 32&34\\
9&23&34&40&  -&40\\
10&29&40&46&  -&46\\
11&31&46&58&  -&58\\
12&37&58&66&  -&66\\
13&41&66&74&  -&74\\
14&43&74&84&\clg 86,  88&90\\
15&47&90&96&\clg 98&100\\
16&53&100&106&  -&106\\
17&59&106&118&  -&118\\
18&61&118&132&  -&132\\
19&67&132&144&\clg146&152\\
20&71&152&164&\clg166, 170, 172&174\\
21&73&174&180&\clg182, 186, 188&190\\
22&79&190&192&\clg194&200\\
23&83&200&216&  -&216\\
24&89&216&228&\clg230&234\\
25&97&234&252&\clg254, 256&258\\
26&101&258&264&  -&264\\
27&103&264&276&\clg278&282\\
28&107&282&294&\clg296, 298&300\\
29&109&300&312&  -&312\\
30&113&312&326&\clg328&330\\
31&127&330&344&\clg346, 350, 352&354\\
32&131&354&366&\clg368, 370, 372, 376&378\\
33&137&378&384&\clg386&388\\
34&139&388&404&\clg406&414\\
35&149&414&422&\clg424&432\\
36&151&432&444&\clg446&450\\
37&157&450&466&\clg468, 472, 474&476\\
38&163&476&486&\clg488&492\\
39&167&492&510&  -&510\\
40&173&510&528&\clg530, 536&538\\
41&179&538&550&  -&550\\
42&181&550&570&\clg572&574\\
43&191&574&590&\clg592, 596, 598&600\\
44&193&600&616&  -&616\\[2pt]
  \noalign{\hrule height 2pt}
\end{tabular}
  \caption{Computational results.}
  \label{tab_res}
\end{table}

\clearpage

\section{Results and final remarks}

In an exhaustive exploration for primes $p_k$ up to $k=44$, we computed $N_{min}(k)$ as well as all non-existent differences $2\mdot n$ between consecutive coprimes to $p_k\#$ with\lb $N_{min}(k)<2\mdot n<N_{max}(k)$. The results are summarised in table\ref{tab_res} above. The data provided include

\qquad the index $k$ of the greatest prime considered,

\qquad the prime number $p_k$,

\qquad the primorial Jacobsthal function of $k-1$,

\qquad the minimum even number $N_{min}(k)$ according to definition\ref{N-min},

\qquad all even numbers below $N_{max}(k)$ which are not element of $D(k)$,

\qquad and the primorial Jacobsthal function of $k$, $h(k)=N_{max}(k)$.\\

The general question of lacking differences is open. For which $k\in\mN$ we\lb have $N_{min}(k)=N_{min}(k)=h(k)$, or otherwise $N_{min}(k)<N_{min}(k)$, i.e. there exists an even integer lower than $h(k)$ which cannot be represented as a difference between consecutive coprimes to $p_k\#$? For both variants, there might exist infinitely many $k$. The data presented do not make us prefer any of these options.\\

A simple lower bound on potential non-existent differences, however, was derived in corollary\ref{lower-bound}. De Polignac presumed $2\mdot p_{k-1}\le N_{min}(k)$\cite{de_Polignac_1849_II} for $k>1$. Instead, the results in table\ref{tab_res}  show that his assumption is wrong, at least for some small prime numbers. It is violated for k=6 and k=8. Perhaps these are the only exceptions because the quotient $N_{min}(k)/p_{k-1}$ appears to grow on average.

The data presented imply a different assumption for $k>1$: At least all even natural numbers through $h(k-1)$ occur as differences of coprimes to $p_k\#$. This is equivalent to the statement that any potential difference $2\mdot n<h(k-1)$ which does not occur as\lb differences of coprimes to $p_{k-1}\#$ occurs in case of coprimes to $p_k\#$. All computed data satisfy this assumption.

\begin{conj} \label{conj}\ \\
Let $k\in\mN>1$. Then
$$h(k-1)\le N_{min}(k).$$
\end{conj}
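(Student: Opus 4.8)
The plan is to show that any even number $2n < h(k-1)$ which fails to be a difference of coprimes to $p_{k-1}\#$ nonetheless becomes such a difference once the prime $p_k$ is adjoined. By Corollary \ref{diff<-->rcov}, it suffices to show: if $2n < h(k-1)$ and there is no restricted covering of $\ap 1 {2n-1}$ by residue classes $\modo p_i$, $i=1,\dots,k-1$, then there is one using all of $p_1,\dots,p_k$. The key structural fact to exploit is that, since $2n-1 < h(k-1)-1$, there does exist an \emph{ordinary} covering of $\ap 1 {2n-1}$ by $\{a_i \modo p_i\}_{i=1}^{k-1}$ (because $h(k-1)-1$ is by Definition \ref{h} the maximum length of a run of integers all sharing a factor with $p_{k-1}\#$, and any run of length $2n-1 \le h(k-1)-1$ starting from a suitable base point witnesses such a covering). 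So the obstruction is never the bare existence of a covering — it is always the \emph{restrictedness}: every covering of a length-$(2n-1)$ sequence by the first $k-1$ primes necessarily also covers one of the two bordering positions.

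First I would fix, via Corollary \ref{non-zero} and Lemma \ref{a-->b}, a covering $\{a_i \modo p_i\}_{i=1}^{k-1}$ of $\ap 1 {2n-1}$ with all $a_i \ne 0$, i.e. position $0$ is uncovered. If this covering is already restricted (position $2n$ also uncovered), we are done by Proposition \ref{k-->k+1} since then $2n \in D(k-1) \subseteq D(k)$ — but that case cannot arise under our hypothesis, so we may assume position $2n$ \emph{is} covered, say by $p_\ell \mid (a_\ell - 2n)$, i.e. $a_\ell \equiv 2n \modo p_\ell$. The idea is now to \emph{free up} position $2n$ by reassigning the residue class of $p_\ell$: choose a new value $a_\ell' \in \{1,\dots,p_\ell-1\}$ with $a_\ell' \not\equiv 2n \modo p_\ell$ that still covers whatever positions in $\{1,\dots,2n-1\}$ were covered by $p_\ell$ alone (positions covered redundantly by other primes need not be recovered), and then use the brand-new prime $p_k$ to mop up the at most $\lceil (2n-1)/p_\ell \rceil - 1$ positions in $\{1,\dots,2n-1\}$ that $p_\ell$ used to cover but $a_\ell'$ no longer does — together with re-covering nothing at position $0$ or $2n$. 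Since $p_k > p_{k-1} \ge p_\ell$ and a single residue class $\modo p_k$ covers positions in an arithmetic progression of common difference $p_k > 2n$ when $p_k \ge 2n$... this last clause is exactly where care is needed, because $p_k$ can be much smaller than $2n$, so one residue class $\modo p_k$ covers several positions and may be forced onto $0$ or $2n$.

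The main obstacle, then, is the combinatorial bookkeeping of this reassignment: showing that one can always pick the replacement pair $(a_\ell', a_k)$ so that (i) all of $\{1,\dots,2n-1\}$ stays covered, and (ii) neither $0$ nor $2n$ gets covered by the two changed classes. One clean way to organize this is to split off $p_1 = 2$ at the outset using Lemma \ref{two} / Corollary \ref{coprime-covering-2}, reducing to restricted coverings of $\ap 1 {n-1}$ by odd primes $p_2,\dots,p_{k}$, where the analogous ``ordinary covering exists'' input comes from $n-1 < (h(k-1)-1)/2$ and a de Polignac-type bound relating $h(k-1)$ to the odd-prime covering length; this roughly halves the number of positions the new prime must handle and makes $p_k$ large relative to the residual sequence. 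A more robust alternative, mirroring the greedy construction in Proposition \ref{exist-cov}, is to rebuild the covering from scratch: take the known length-$(h(k-1)-1)$ all-non-coprime run for $p_{k-1}\#$, truncate it to length $2n-1$, and then \emph{re-solve} the small number of resulting defects using $p_k$ plus the slack created by truncation — here the hard estimate is bounding the number of defects introduced by truncation by something like $(h(k-1) - 2n)/2$ and checking $p_k$ is large enough to absorb them while avoiding the borders. I expect that whichever route is taken, the crux is a single inequality of the form ``number of positions $p_k$ must newly cover, plus $2$ forbidden border positions, is less than $p_k$'', and that establishing this inequality for \emph{all} $k>1$ and all $2n$ in the relevant range — rather than just asymptotically — is precisely what the authors were unable to do, which is why this remains stated as Conjecture \ref{conj} rather than a theorem.
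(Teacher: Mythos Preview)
The statement you are addressing is labelled \emph{Conjecture} in the paper and carries no proof there; the authors present it as an open problem supported only by the computational data in Table~\ref{tab_res}. So there is no proof in the paper to compare your proposal against, and indeed your final sentence correctly recognises this.

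That said, the approach you sketch has a concrete structural gap beyond the closing inequality you flag. When you propose to ``reassign'' the residue class of $p_\ell$ and then use $p_k$ to ``mop up'' the positions vacated, you implicitly treat the new prime $p_k$ as if it could cover an arbitrary subset of $\{1,\dots,2n-1\}$ of bounded size. It cannot: a single residue class $a_k \modo p_k$ covers only the arithmetic progression $\{a_k, a_k+p_k, a_k+2p_k,\dots\}$, so unless the positions you need to recover happen to lie in such a progression, $p_k$ covers at most one of them. The positions formerly covered by $p_\ell$ lie in a progression with common difference $p_\ell \ne p_k$, so they will essentially never be compatible with a single class $\modo p_k$. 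Hence the quantity you would need to bound is not ``number of positions to recover versus $p_k$'' but rather ``number of positions to recover versus $1$'', and that fails immediately whenever $p_\ell$ covered two or more interior positions uniquely. Your alternative route via truncating a maximal $h(k-1)-1$ run has the same defect: truncation can uncover many scattered positions (the roles of several primes may terminate inside the removed tail), and a single new residue class cannot repair a scattered set. This is not merely a quantitative shortfall to be closed by a sharper inequality; it is a qualitative mismatch between what one new residue class can do and what the argument asks of it, and it is presumably why the authors left the statement as a conjecture.
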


\begin{prop}\ \\
Let $k,n\in\mN$, $k>1$. Then\vspace{4mm}

\qquad $h(k-1)\le N_{min}(k)$

\qquad\qquad $\iff$

\qquad $(2\mdot n<h(k-1)\land 2\mdot n\not\in D(k-1))
\Longrightarrow 2\mdot n\in D(k).$
\end{prop}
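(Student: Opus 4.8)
The plan is to prove the two implications of the equivalence separately; both are short consequences of the definition of $N_{min}$ (Lemma~\ref{N-min}) together with the propagation statement Proposition~\ref{k-->k+1}. First I would record two elementary facts that make the boundary behaviour transparent. The number $h(k-1)$ is even, since $h(k-1)=N_{max}(k-1)$, and moreover $h(k-1)\in D(k-1)$, because $N_{max}(k-1)$ is by definition the largest element of $D(k-1)$. I would also restate that, by Lemma~\ref{N-min} and the maximality built into $N_{min}$, the even numbers guaranteed to lie in $D(k)$ form an initial segment $2,4,\dots,N_{min}(k)$; hence $h(k-1)\le N_{min}(k)$ is equivalent to the assertion that $2\mdot n\in D(k)$ for every even $2\mdot n$ with $2\mdot n\le h(k-1)$.

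For the direction $h(k-1)\le N_{min}(k)\Rightarrow(\dots)$ I would take any $n$ with $2\mdot n<h(k-1)$ and $2\mdot n\not\in D(k-1)$; then $2\mdot n<h(k-1)\le N_{min}(k)$, so $2\mdot n\in D(k)$ immediately from Lemma~\ref{N-min}. Note that here the hypothesis $2\mdot n\not\in D(k-1)$ is not even used.

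For the converse I would assume the implication and, using the initial-segment characterisation above, verify $2\mdot n\in D(k)$ for every even $2\mdot n\le h(k-1)$, splitting into two cases. If $2\mdot n\in D(k-1)$, then $2\mdot n\in D(k)$ by Proposition~\ref{k-->k+1}. If $2\mdot n\not\in D(k-1)$, then since $h(k-1)\in D(k-1)$ we must have $2\mdot n\ne h(k-1)$, hence $2\mdot n<h(k-1)$; the assumed implication then gives $2\mdot n\in D(k)$. In both cases $2\mdot n\in D(k)$, so $N_{min}(k)\ge h(k-1)$.

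I do not expect a genuine obstacle here; the only point needing care is the boundary value $2\mdot n=h(k-1)$, which is handled by the observation $h(k-1)\in D(k-1)$, and making precise the claim that $h(k-1)\le N_{min}(k)$ is exactly the containment of all even numbers up to $h(k-1)$ in $D(k)$. Everything else is bookkeeping on top of Lemma~\ref{N-min} and Proposition~\ref{k-->k+1}.
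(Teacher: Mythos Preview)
Your proposal is correct and follows the same approach as the paper, which simply states that the equivalence follows from Proposition~\ref{k-->k+1}. Your version is a careful unpacking of that one-line proof: the forward direction is immediate from the definition of $N_{min}$, and for the converse you split on whether $2\mdot n\in D(k-1)$, invoking Proposition~\ref{k-->k+1} in the affirmative case and the hypothesis in the negative case, with the boundary value $2\mdot n=h(k-1)$ handled via $h(k-1)=N_{max}(k-1)\in D(k-1)$.
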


\begin{proof}
The equivalence follows from proposition\ref{k-->k+1}.
\end{proof}

It can be presumed that Conjecture\ref{conj} is a stronger requirement than de Polignac's assumtion, except for small prime numbers. For all known values of Jacobsthal's\lb function \cite{Ziller_Morack_2016}, we have $2\mdot p_{k-1}<h(k-1)$ for $k>18$, \ i.e. $h(k)>2\mdot p_k$ for $k>17$.

\pagebreak

\subsubsection*{Contact}
marioziller@arcor.de
\ \\\\



\end{document}